\documentclass[a4paper,12pt,leqno]{amsart}%modified esa 24.6.2012
\usepackage{amsmath,amsthm,amssymb,latexsym,epsfig,graphicx,subfigure, color}
\setlength{\textheight}{22 cm} \setlength{\textwidth}{15 cm}
\setlength{\oddsidemargin}{0.5cm}\setlength{\evensidemargin}{0.5cm}
\setlength{\topmargin}{0cm}
\setlength{\headheight}{1cm} \setlength{\marginparwidth}{6.5cm}

\usepackage{enumerate}
\usepackage{color}

\renewcommand{\epsilon}{\varepsilon}

\DeclareMathOperator{\dimH}{dim_H}
\DeclareMathOperator{\diam}{diam}
\DeclareMathOperator{\spt}{spt}

\newcommand{\J}{{\bf J}}
\newcommand{\bi}{{\bf i}}
\newcommand{\bk}{{\bf k}}
\newcommand{\bj}{{\bf j}}

\newtheorem{theorem}{Theorem}[section]
\newtheorem{lemma}[theorem]{Lemma}

\newtheorem{proposition}[theorem]{Proposition}

\theoremstyle{definition}

\theoremstyle{remark}
\newtheorem{remark}[theorem]{Remark}

\numberwithin{equation}{section}

\begin{document}

\title[Affine random covering sets in torus]
{Hausdorff dimension of affine random covering sets in torus}

\author[J\"arvenp\"a\"a]{Esa J\"arvenp\"a\"a$^1$}
%\email{esa.jarvenpaa@oulu.fi}
%\address{Department of Mathematical Sciences, P.O. Box 3000, 90014 University of Oulu,
%Finland}

\author[J\"arvenp\"a\"a]{Maarit J\"arvenp\"a\"a$^1$}
%\email{maarit.jarvenpaa@oulu.fi}
%\address{Department of Mathematical Sciences, P.O. Box 3000, 90014 University of Oulu,
%Finland}

\author[Koivusalo]{Henna Koivusalo$^1$}
%\email{henna.koivusalo@oulu.fi}
%\address{Department of Mathematical Sciences, P.O. Box 3000, 90014 University of Oulu,
%Finland}

\author[Li]{Bing Li$^{2,1}$}
%\email{libing0826@gmail.com}
%\address{Department of Mathematics, South China University of Technology,
%Guangzhou, 510641, P.R. China}
%\address{Department of Mathematical Sciences, P.O. Box 3000, 90014 University of Oulu,
%Finland}

\author[Suomala]{Ville Suomala$^1$}

\address{$^1$Department of Mathematical Sciences, P.O. Box 3000, 90014
University of Oulu, Finland}
\address{$^2$Department of Mathematics, South China University of Technology,
Guangzhou, 510641, P.R. China}
\email{esa.jarvenpaa@oulu.fi, maarit.jarvenpaa@oulu.fi, henna.koivusalo@oulu.fi,
 libing0826@gmail.com, ville.suomala@oulu.fi}

\date{\today}
\thanks{We acknowledge the support of Academy of Finland, the Centre of Excellence in Analysis and Dynamics
Research. HK acknowledges the support of Jenny and Antti Wihuri Foundation.
We thank Ai-Hua Fan and the referee for useful comments.}
\subjclass[2010]{60D05, 28A80}
\keywords{Random covering set, Hausdorff dimension, affine Cantor set}

\begin{abstract}
We calculate the almost sure Hausdorff dimension of the random covering set
$\limsup_{n\to\infty}(g_n + \xi_n)$ in $d$-dimensional torus $\mathbb T^d$,
where the sets $g_n\subset\mathbb T^d$ are parallelepipeds, or more generally,
linear images of a set with nonempty interior, and $\xi_n\in\mathbb T^d$ are
independent and uniformly distributed random points. The dimension formula,
derived from the singular values of the linear mappings, holds provided that the
sequences of the singular values are decreasing.
\end{abstract}

\maketitle

\section{Introduction}

Given a sequence of positive numbers $(l_n)$ and a sequence of independent
random variables $(\xi_n)$ uniformly distributed on the circle $\mathbb T^1=\mathbb R/\mathbb Z$,
define {\it the random covering set} $E$ as follows:
\[
E=\{x\in \mathbb T^1\mid x\in [\xi_n,\xi_n + l_n] \text{ for infinitely many } n \}
=\limsup_{n\to\infty}\left[\xi_n,\xi_n + l_n\right].
\]
Denoting the Lebesgue measure by $\mathcal L$ and using the Borel-Cantelli lemma
and Fubini's theorem,
it follows that, almost surely, the following dichotomy holds:
\begin{equation}\label{01 for E}
\mathcal L(E)=\begin{cases}
	0, \text{ when }\sum_{n=1}^\infty l_n<\infty\\
	1, \text{ when }\sum_{n=1}^\infty l_n=\infty
	\end{cases},
\end{equation}
that is, almost all or almost no points of the circle
are covered, depending on whether or not the series of the lengths of the
covering intervals diverges.

The case of full Lebesgue measure has been extensively studied. It was
a long-standing problem to find conditions on $(l_n)$ guaranteeing that
the whole circle is covered almost surely, that is,
\begin{equation}\label{Dvoretzky}
P(E=\mathbb T^1)=1.
\end{equation}
This problem, known in literature as the Dvoretzky covering problem,
was first posed by Dvoretzky ~\cite{Dvoretzky56} in 1956. After substantial
contribution of many, including Kahane ~\cite{Kahane59},
Erd{\H{o}}s ~\cite{Erdos61}, Billard ~\cite{Billard65} and
Mandelbrot ~\cite{Mandelbrot72b},
the full answer was given by Shepp ~\cite{Shepp72} in 1972. He proved that
(\ref{Dvoretzky}) holds if and only if
\[
\sum_{n=1}^\infty \frac {1}{n^2} \exp(l_1 + \dots + l_n)=\infty,
\]
where the lengths $(l_n)$ are in decreasing order. After this, a natural
problem, raised by Carleson (private communication to Kahane), is to describe
the growth of the covering number of a given point $x\in \mathbb T^1$, that is, to
study the asymptotic behaviour of the sums
\[
C_N(x)=\sum_{n=1}^N \chi_{[\xi_n, \xi_n+l_n]}(x),
\]
where $\chi_A$ is the characteristic function of a set $A$. Obviously, the
expectation $\mathbb{E}(C_N(x))=\sum_{n=1}^Nl_n$. In the case
$l_n=\frac{\gamma}{n}$ with $\gamma>1$, Fan and Kahane \cite{FanKahane93}
proved that almost surely the order of the covering number $C_N(x)$ is $\log N$
for every $x\in \mathbb T^1$, meaning that for sufficiently large $N$
\[
A_{\gamma}\log N\le\min_{x\in \mathbb T}^1C_N(x)\le\max_{x\in \mathbb T^1}C_N(x)\le B_{\gamma}\log N
\]
with positive and finite constants $A_{\gamma}$ and $B_{\gamma}$. Furthermore, Fan
\cite{Fan} verified that the set
\[
F_{\beta}=\{x\in \mathbb T^1\mid \lim_{N\to\infty}\frac{C_N(x)}{\sum_{n=1}^Nl_n}=\beta \}
\]
has positive Hausdorff dimension for a certain interval of $\beta >0$ in the
case $l_n=\frac{\gamma}{n}$ with $\gamma>0$. For general $l_n$, Barral and
Fan \cite{BF} answered Carleson's problem by identifying three kinds of
phenomena depending whether the index
$\bar{\gamma}=\limsup_{N\to\infty}\frac{\sum_{n=1}^Nl_n}{-\log l_N}$ is zero,
positive and finite or infinite. More precisely, when $\bar\gamma=0$,
$\dimH F_\beta=1$ almost surely for all $\beta\ge 0$, when
$\bar{\gamma}=\infty$, $F_1=\mathbb T^1$ almost surely, and when
$0<\bar{\gamma}<\infty$, $\dimH F_\beta$ depends on $\beta$. Here the Hausdorff
dimension is denoted by $\dimH$.

For the case of zero Lebesgue measure, the Hausdorff dimension of $E$ was first
calculated by Fan and Wu~\cite{FanWu04} in the case $l_n=1 /n^\alpha$. When
studying the Hausdorff measure and the large intersection properties of $E$ for
general $l_n$, Durand~\cite{Durand10} gave another, independent proof of the
dimension result. According to \cite{FanWu04} and \cite{Durand10}, the
almost sure Hausdorff dimension of $E$ is given by
\begin{equation}\label{goal}
\dimH E=\inf \{t\geq 0\mid\sum_{n=1}^\infty l_n^t<\infty\}
=\limsup_{n\to\infty}\frac{\log n}{-\log l_n},
\end{equation}
where the lengths $l_n$ are in decreasing order. In \cite{Durand10}, the author
also proved that the packing dimension of $E$ equals $1$ almost surely.
When considering the hitting probability
property of the random set $E$, Li, Shieh and Xiao ~\cite{BingShiehXiao}
provided an alternative way to obtain the Hausdorff and packing dimension
results under some additional conditions. The result \eqref{goal} can be also
proven as a consequence of the mass transference principle due to Beresnevich
and Velani~\cite{BeresnevichVelani06} (see Proposition~\ref{easythm}). The fact that both packing and box counting dimensions are equal to $1$ almost surely follows since $E$ is almost
surely a dense $G_\delta$-set in $\mathbb T^1$(see ~\cite[Chapter 5, Proposition 11]{Kahane85} and \cite[Section 2]{Orponen}).

In this paper we study random covering sets in $d$-dimensional torus
$\mathbb T^d$. Letting $(g_n)$ be a sequence of subsets of $\mathbb T^d$
and letting $(\xi_n)$ be a sequence of independent random variables,
uniformly distributed on $\mathbb T^d$, define the random covering set by
\[
E=\limsup_{n\to\infty}(g_n + \xi_n)
=\bigcap_{n=1}^\infty\bigcup_{k=n}^\infty(g_k + \xi_k).
\]
Notice that a counterpart of \eqref{01 for E} is easily obtained, that is,
almost surely
\[
\mathcal L(E)=\begin{cases}
	0, \text{ when }\sum_{n=1}^\infty\mathcal L(g_n)<\infty\\
	1, \text{ when }\sum_{n=1}^\infty\mathcal L(g_n)=\infty
	\end{cases},
\]
where $\mathcal L$ is the Lebesgue measure on $\mathbb T^d$.

On the $d$-dimensional torus the Dvoretzky covering problem has been studied by
El H\'elou ~\cite{Helou78} and Kahane ~\cite{Kahane90} among others.
In \cite{Kahane90} Kahane gave a complete solution for the problem
when the sets $g_n$ are similar simplexes (see also Janson~\cite{Janson86}).
However, in the general case
the covering problem has not been completely solved.

For an overview on the research on random covering sets and related topics, we
refer to \cite[Chapter 11]{Kahane85}, the survey \cite{Kahane00} and the
references therein. Here we only mention a few variations on the classical
random covering model. For example, Hawkes~\cite{Hawkes73} considered under
which conditions all the points in $K\subset \mathbb T^1$ are covered with probability
one (or zero). Mandelbrot~\cite{Mandelbrot72}, in turn, introduced Poisson
covering of the real line (see also Shepp~\cite{Shepp72a}). In general metric
spaces, the random coverings by balls
have been studied by Hoffman-J\"orgensen~\cite{HoffmanJorgensen73}. Recent
contributions to the topic include various types of dynamical models, see Fan,
Schmeling and Troubetzkoy~\cite{FanSchmelingTroubetzkoy},
Jonasson and Steif~\cite{JonassonSteif08} and Liao and
Seuret~\cite{LiaoSeuret}.

We address the question of determining the analogue of \eqref{goal} in higher
dimensional case.
In ~\cite{FanWu04} the method is strongly adapted to the 1-dimensional case
whereas the argument based on the mass transference principle
~\cite{BeresnevichVelani06} can be carried through in any dimension
provided that the sets $g_n$ are uniformly ball like
(see Proposition~\ref{easythm}). Our main
interest is the case where the sets $g_n$ are not uniformly ball like, and
therefore, the mass transference principle cannot be applied.
It turns out that almost surely the Hausdorff dimension of
the covering set $E$ is given in terms of the singular value functions of the
linear mappings related to the system, see Theorem \ref{realthing}.

To this end, in Section \ref{lemmas} we introduce our setting, state our main
result and prove preliminary lemmas including the upper bound for the
dimension.
In Section \ref{construction} we construct a random subset of the covering set
$E$ having large dimension with positive probability which, in turn, gives
the lower bound of the dimension in Section \ref{dimension}.

\section{Preliminaries and statement of main theorem}
\label{lemmas}

Denote the closed ball of radius $r$ and centre $x$ in $\mathbb R^d$
by $B(x,r)$. Letting $L:\mathbb R^d\to\mathbb R^d$ be a contractive linear
injection, the image $L(B(0,1))$ of the unit ball $B(0,1)$ is an ellipse whose
semiaxes are non degenerated. The singular values
$0<\alpha_d(L)\leq \dots\leq \alpha_1(L)<1$ of $L$
are the lengths of the semiaxes of $L(B(0,1))$ in decreasing order.
Given $0< s\leq d$, define the {\it singular value function} by
\[
\Phi^s(L)=\alpha_1(L)\cdots\alpha_{m-1}(L)\alpha_m(L)^{s-(m - 1)},
\]
where $m$ is the integer such that $m-1<s\leq m$.

We use the notations $\mathbb T^d$ for the $d$-dimensional torus and
$\mathcal L$ for the Lebesgue measure on $\mathbb T^d$.
Consider a probability space $(\Omega, \mathcal A, P)$ and let
$(\xi_n)$ be a sequence of independent random variables which are uniformly
distributed
on $\mathbb T^d$, that is, $(\xi_n)_\ast P=\mathcal L$, where
$(\xi_n)_\ast P$ is the image measure of $P$ under $\xi_n$.
Letting $(g_n)$ be a sequence of subsets of $\mathbb T^d$, we use the notation
$G_n$ for the random translates $G_n=g_n+\xi_n\subset\mathbb T^d$
and define the random covering set generated by $(g_n)$ by
\[
E=E^\omega=\limsup_{n\to\infty}G_n.
\]

In this paper we consider the case $g_n=\Pi(L_n(R))$, where
$R\subset[0,1]^d$ has non-empty interior,
$L_n:\mathbb R^d\to\mathbb R^d$ is a contractive linear injection
for all $n\in\mathbb N$ and $\Pi:\mathbb R^d\to\mathbb T^d$ is the natural
covering map. Moreover, we assume that
for all $i=1,\dots ,d$ the sequence of singular values $\alpha_i(L_n)$
decreases to $0$ as $n$ tends to infinity. Defining
\begin{equation}\label{szero}
s_0=\inf\{0<s\le d\, |\, \sum_{n=1}^\infty \Phi^s(L_n)<\infty \},
\end{equation}
with the interpretation
$s_0=d$ if $\sum_{n=1}^\infty \Phi^d(L_n)=\infty$,
we are ready to state our main theorem.

\begin{theorem}\label{realthing} For $P$-almost all $\omega\in\Omega$ we have
\begin{equation}\label{dimnumber}
\dimH E^\omega=s_0.
\end{equation}
\end{theorem}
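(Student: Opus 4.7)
The plan is to prove the two inequalities separately. The upper bound $\dimH E^\omega\le s_0$ is a deterministic statement about arbitrary translates: for $s>s_0$ the series $\sum_n \Phi^s(L_n)$ converges, and $L_n(R)$ can be covered by approximately $\alpha_1(L_n)\cdots\alpha_{m-1}(L_n)\alpha_m(L_n)^{-(m-1)}$ closed balls of radius $\alpha_m(L_n)$ with $m=\lceil s\rceil$, since the image ellipse lies in a box of side lengths comparable to $\alpha_1(L_n),\dots,\alpha_d(L_n)$. The sum of the $s$-th powers of the corresponding diameters is then comparable to $\Phi^s(L_n)$. Because $E^\omega\subset\bigcup_{n\ge N}G_n$ for every $N$, sending $N\to\infty$ forces $\mathcal H^s(E^\omega)=0$, and letting $s\searrow s_0$ gives the upper bound indicated for Section~\ref{lemmas}.

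For the lower bound, fix $s<s_0$, so $\sum_n\Phi^s(L_n)=\infty$. The event $\{\dimH E^\omega\ge s\}$ belongs to the tail $\sigma$-algebra generated by $(\xi_n)$, since $E^\omega$ is a $\limsup$ and hence unaffected by modifying finitely many $\xi_n$. Kolmogorov's zero--one law therefore reduces the task to proving $P(\dimH E^\omega\ge s)>0$, after which letting $s\nearrow s_0$ completes the proof.

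To obtain this positive-probability bound I would construct, inside $E^\omega$, a random Cantor-type subset $F^\omega$ with $\dimH F^\omega\ge s-\epsilon$ on an event of positive probability. Partition $\mathbb N$ into consecutive blocks $B_k=[N_k,N_{k+1})$ chosen so that $\sum_{n\in B_k}\Phi^s(L_n)$ lies in a fixed compact subinterval of $(0,\infty)$, which is possible since the tail of the series diverges while individual terms tend to zero. Starting from a fixed box $F_0\subset\mathbb T^d$, inductively let $F_{k+1}$ be the union of those $G_n$ with $n\in B_k$ that lie inside a slightly shrunken copy of some component of $F_k$ and are sufficiently separated, and set $F^\omega=\bigcap_kF_k$. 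Then $F^\omega\subset\limsup_nG_n=E^\omega$ automatically. The expected number of surviving children of a given parent is comparable to $\mathcal L(F_k)\sum_{n\in B_k}\mathcal L(g_n)$, and a Paley--Zygmund / second-moment argument, controlling the pairwise ``collision'' probabilities of $\xi_n,\xi_{n'}$, gives a survival probability bounded away from zero uniformly in $k$. A uniform mass distribution on $F^\omega$ then satisfies a Frostman-type estimate $\mu(B(x,r))\lesssim r^{s-\epsilon}$ on the scales $r=\alpha_i(L_{n_k})$ dictated by $\Phi^s(L_{n_k})$, giving the desired lower bound.

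The main obstacle is the anisotropy. Because the $d$ sequences $\bigl(\alpha_i(L_n)\bigr)_n$ may decrease at very different rates, consecutive level ellipsoids are not geometrically compatible, so nesting children inside a parent is a genuine geometric problem rather than bookkeeping. I would handle this by inscribing, inside each parent cell, a shrunken axis-aligned box of the correct aspect ratio for the next level's orientation, absorbing the resulting loss into a harmless constant. Controlling the second moment of the child count is also delicate, since the relevant collision probability between $G_n$ and $G_{n'}$ is not simply $\mathcal L(g_n)\mathcal L(g_{n'})$ but a convolution depending on the overlap of $L_n(R)$ and $L_{n'}(R)$. Organising the indices inside $B_k$ into subblocks with comparable singular-value profiles, and performing the Frostman bound rank by rank on the scales $\alpha_1,\dots,\alpha_d$, is the technical heart of the argument.
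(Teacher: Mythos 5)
Your overall architecture matches the paper's: the upper bound via covering each $G_n$ by roughly $\alpha_1\cdots\alpha_{m-1}\alpha_m^{-(m-1)}$ cubes of side $\alpha_m$ (Lemma~\ref{upper bound}), the reduction of the lower bound to a positive-probability statement via the Kolmogorov zero--one law, and the construction of a nested random Cantor subset of $E^\omega$ from blocks of indices, with a second-moment argument (Proposition~\ref{probability estimate}) guaranteeing enough surviving children per parent. Those parts are sound.

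The genuine gap is in the final step of the lower bound. You propose to conclude by establishing a Frostman-type estimate $\mu(B(x,r))\lesssim r^{s-\epsilon}$ for a uniform mass distribution on the Cantor set, handled ``rank by rank on the scales $\alpha_1,\dots,\alpha_d$.'' This is precisely the point the paper flags as the obstruction: for affine constructions there is no separation condition available --- the rectangles $G_n$ are randomly translated long thin boxes that can cluster arbitrarily near a given point $x$, so a pointwise bound on how many level-$k$ rectangles a ball of radius $\alpha_i(g_{n_k})$ meets is not controllable, and for self-affine sets it is well known that direct ball estimates generically give the wrong (box-counting) exponent rather than the affinity dimension $s_0$. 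The paper circumvents this entirely by a potential-theoretic argument: it shows $\mathbb E\bigl(I^s(\mu^\omega_l)\bigr)$ is uniformly bounded (Proposition~\ref{energy uniform}), which requires only an \emph{averaged} control of pair correlations (Lemma~\ref{** cond}) combined with Falconer's transversality estimate $\int_{[0,1]^d}|T(x)|^{-s}\,d\mathcal L(x)\le D_0/\Phi^s(T)$ (Lemma~\ref{falconer}); finiteness of the $s$-energy then gives $\dimH C^\omega\ge s$ without any pointwise ball estimate. A secondary but related omission: the paper replaces each selected $G_i$ by an isometric copy of the single rectangle $g_{n_k}$ (property~\eqref{rotations}), so that all level-$k$ pieces share one singular value profile; your blocks $B_k$ retain sets of varying shape, which would break both the per-level definition of $\Phi^s$ and the energy computation. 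As written, your argument does not close; you would need to replace the Frostman step by the energy/transversality machinery (or supply a genuinely new mechanism for the ball counts, which you have not done).
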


Theorem \ref{realthing} is an immediate consequence of the following
proposition concerning the case where each generating set $g_n$ is a rectangular
parallelepiped in $\mathbb T^d$ meaning that there exist a parallelepiped
$\tilde g_n\subset\mathbb R^d$ such that $g_n=\Pi(\tilde g_n)$.
In what follows rectangular parallelepipeds will consistently be called
rectangles.

Let $E(g_n)=E^\omega(g_n)$ be the covering set generated by a sequence
$(g_n)$ of rectangles.
For all rectangles $g$ and for all $0< s\leq d$ define
\[
\Phi^s(g)=\alpha_1(g)\cdots\alpha_{m-1}(g)\alpha_m(g)^{s-(m - 1)},
\]
where $0<\alpha_d(g)\leq \dots\leq \alpha_1(g)<1$ are the lengths the edges
of $g$ in decreasing order and $m$ is the integer such that $m-1<s\leq m$.

\begin{proposition}\label{almostrealthing} Assume that $(g_n)$ is a sequence
of rectangles such that for all $i=1,\dots,d$ the sequence of lengths
$\alpha_i(g_n)$ decreases to $0$ as $n$ tends to infinity. Then almost surely
\begin{equation}\label{dimnumberrec}
\dimH E(g_n)=s_0(g_n),
\end{equation}
where
\[
s_0(g_n)=\inf\{0<s\le d\, |\, \sum_{n=1}^\infty \Phi^s(g_n)<\infty \}
\]
with the interpretation $s_0=d$ if $\sum_{n=1}^\infty \Phi^d(g_n)=\infty$.
\end{proposition}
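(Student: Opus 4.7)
The plan is to prove the two inequalities in \eqref{dimnumberrec} separately, with the upper bound being a routine covering argument handled already in Section~\ref{lemmas} and the lower bound requiring the random Cantor-type construction deferred to Sections~\ref{construction} and~\ref{dimension}.

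For the \emph{upper bound}, fix any $s > s_0(g_n)$, so that $\sum_n \Phi^s(g_n) < \infty$. A rectangle $g_n$ with edge lengths $\alpha_1(g_n) \ge \cdots \ge \alpha_d(g_n)$ can be covered by a constant multiple of $\prod_{i=1}^{m-1}\bigl(\alpha_i(g_n)/\alpha_m(g_n)\bigr)$ cubes of side $\alpha_m(g_n)$, where $m-1 < s \le m$. Hence the $s$-dimensional Hausdorff content of each random translate $G_n$ is bounded by a constant multiple of $\Phi^s(g_n)$. Since $E \subseteq \bigcup_{n \ge N} G_n$ for every $N$, tail summability gives $\mathcal H^s(E) = 0$ almost surely, and letting $s \downarrow s_0$ yields $\dimH E \le s_0$ almost surely.

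For the \emph{lower bound}, fix $s < s_0(g_n)$ and aim to construct, with positive probability, a random Cantor subset $F \subset E$ with $\dimH F \ge s$. By the definition of $s_0$, $\sum_n \Phi^s(g_n) = \infty$. Partition $\mathbb N$ into consecutive blocks $I_k = \{N_k, \dots, N_{k+1}-1\}$ chosen so that $\sum_{n \in I_k} \Phi^s(g_n)$ is large enough to absorb the combinatorial and probabilistic losses of one Cantor step. Inductively, given a parent rectangle $P$ from generation $k-1$, consider those indices $n \in I_k$ for which $G_n = g_n + \xi_n$ lies inside a fixed interior sub-rectangle of $P$; each such event has probability comparable to $\mathcal L(P)$ and the events are independent across $n$, so Chebyshev-type concentration allows one to retain enough disjoint children with high conditional probability, provided $|I_k|$ is large enough. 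Distributing mass uniformly on the children at every level produces a Frostman measure $\mu$ on the limit set $F$; estimating $\mu(B(x,r))$ by counting how many generation-$k$ rectangles meet $B(x,r)$ at scale $r \approx \alpha_m(g_n)$ yields $\mu(B(x,r)) \lesssim r^s$, and hence $\dimH F \ge s$. This gives $\dimH E \ge s$ with positive probability, and Kolmogorov's zero-one law applied to the tail sequence $(\xi_n)$ upgrades this to probability one; taking $s \uparrow s_0$ concludes the proof.

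The \emph{main obstacle} is the anisotropy: because the rectangles $g_n$ may be highly eccentric and their singular-value sequences need not be comparable across indices, the children of a parent $P$ live simultaneously at a range of scales and in different directions. The ball-by-ball counting driving the mass-distribution estimate must therefore be split into regimes according to which semiaxis is being resolved at scale $r$, matching the piecewise definition of $\Phi^s$ on the intervals $(m-1, m]$. Maintaining disjointness of the selected children inside $P$ while still extracting enough of them to realise dimension $s$ forces the careful choice of the block sizes $|I_k|$ and is the technical heart of the lower bound.
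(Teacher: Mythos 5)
Your skeleton matches the paper's up to the decisive step: the upper bound is the same covering argument (Lemma~\ref{upper bound}), and the lower bound does proceed via a random Cantor subset built over sparse blocks, with a Chebyshev-type second-moment bound (Proposition~\ref{probability estimate}) ensuring each parent retains enough children with positive probability, followed by Kolmogorov's zero-one law. The gap is in how you extract the dimension of that Cantor set. You propose to keep the children of each parent \emph{disjoint} and verify a pointwise Frostman estimate $\mu(B(x,r))\lesssim r^s$ by counting generation-$k$ rectangles meeting $B(x,r)$. The paper explicitly rejects this route, remarking that separation ``plays no role'' and that ``a direct estimate for measures of balls is probably hopeless'': the children are eccentric rectangles dropped at uniformly random positions inside the parent, so with non-negligible probability a ball of intermediate radius $\alpha_d(g_{n_k})<r<\alpha_1(g_{n_k})$ captures an atypically large cluster of children strung along the long axes of the parent, and no separation condition prevents this --- the same phenomenon that defeats pointwise mass-distribution arguments for self-affine sets. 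A bound $\mu(B(x,r))\lesssim r^s$ valid for \emph{all} balls would give positive $\mathcal H^s$-measure, which one should not expect at the critical exponent. Your disjointness requirement is moreover neither delivered by the Chebyshev step (which only counts how many $\xi_n$ land in an interior copy of the parent) nor needed.

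What the paper does instead is potential-theoretic: it takes $\mu_l^\omega$ to be the normalized sum of the pushforwards $(T'_{\bi}+\phi_{\bi})_*\mathcal L$ over generation-$l$ words and bounds the \emph{expected} $s$-energy $\mathbb E\bigl(I^s(\mu_l^\omega)\bigr)$ uniformly in $l$. The two ingredients are a correlation estimate (Lemma~\ref{** cond}) reducing $\mathbb E\bigl(\chi_{G(\bi)}(x)\chi_{G(\bj)}(y)\bigr)$ to the common ancestor $G(\bi\wedge\bj)$, and Falconer's transversality lemma $\int_{[0,1]^d}|T(x)|^{-s}\,d\mathcal L(x)\le D_0/\Phi^s(T)$, which is exactly where $\Phi^s$ enters; the resulting series $\sum_K D/(N_K\Phi^s(T_{n_K}))$ converges thanks to the choice of the subsequence in Lemma~\ref{bigger than 1} and conditions \eqref{exponent epsilon}--\eqref{nk large}. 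Averaging over the randomness of the translations is essential here and is precisely what a pointwise counting argument cannot supply. Relatedly, your block-selection criterion (making $\sum_{n\in I_k}\Phi^s(g_n)$ large) would have to be replaced by the quantitative choices \eqref{firstchoice} and \eqref{exponent epsilon}, since what must be summably large is $N_K\Phi^s(T_{n_K})$, not the block sums of $\Phi^s$.
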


We proceed by verifying first that Theorem \ref{realthing} follows from
Proposition \ref{almostrealthing}.

\begin{proof}[Proof of Theorem \ref{realthing} as a consequence of
Proposition \ref{almostrealthing}]
Letting $(L_n)$, $R$ and $E$ be as in Theorem \ref{realthing}, there are
sequences
$(g'_n)$ and $(g_n)$ of rectangles such that
$g'_n\subset\Pi(L_n(R))\subset g_n$, and moreover,
$\alpha_i(g'_n)=c'\alpha_i(L_n)$ and
$\alpha_i(g_n)=c\alpha_i(L_n)$ for all $i=1,\dots,d$. Here the constants
$c'$ and $c$ are independent of $n$ and $i$. Since
$E(g'_n)\subset E\subset E(g_n)$ we have
\[
\dimH E(g'_n)\le\dimH E\le\dimH E(g_n).
\]
Applying Proposition \ref{almostrealthing} to the sequences $(g'_n)$
and $(g_n)$ and noting that $s_0(g'_n)=s_0(g_n)=s_0$, gives
\eqref{dimnumber}.
\end{proof}

It remains to prove Proposition \ref{almostrealthing}. As the first step we
verify the following lemma according to which the Hausdorff dimension of
$E(g_n)$ is always bounded above by $s_0(g_n)$. The proof is standard
following, for example, the ideas in ~\cite{Falconer88}.

\begin{lemma}\label{upper bound} Assume that $(g_n)$ and $s_0(g_n)$ are as
in Proposition \ref{almostrealthing}. Then
for all $\omega\in \Omega$ we have
$\dimH E^\omega(g_n)\leq s_0(g_n)$.
\end{lemma}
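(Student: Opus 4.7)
The plan is a direct covering argument. Fix any $s>s_0(g_n)$, so that $\sum_{n=1}^\infty\Phi^s(g_n)<\infty$, and let $m\in\{1,\dots,d\}$ be the integer with $m-1<s\leq m$. Since $E^\omega(g_n)=\bigcap_{N=1}^\infty\bigcup_{n\geq N}(g_n+\xi_n)$, for every $N$ the family $\{g_n+\xi_n\}_{n\geq N}$ is a cover of $E^\omega(g_n)$, and it suffices to exhibit, for each $n$, an efficient cover of the translate $g_n+\xi_n$ whose $s$-cost is bounded by a constant multiple of $\Phi^s(g_n)$.

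The natural choice is to tile $g_n$ by cubes of side length $\alpha_m(g_n)$. In the $i$-th coordinate direction of the rectangle one needs $\lceil\alpha_i(g_n)/\alpha_m(g_n)\rceil$ such cubes when $i\leq m$ and a single cube when $i>m$ (since then $\alpha_i(g_n)\leq\alpha_m(g_n)$). Thus the total number of cubes used is at most
\[
N_n=\prod_{i=1}^{m-1}\Bigl\lceil\tfrac{\alpha_i(g_n)}{\alpha_m(g_n)}\Bigr\rceil
\leq 2^{m-1}\prod_{i=1}^{m-1}\tfrac{\alpha_i(g_n)}{\alpha_m(g_n)},
\]
and each cube has diameter $\sqrt d\,\alpha_m(g_n)$. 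Doing this for every $n\geq N$ yields a cover of $E^\omega(g_n)$ by sets of diameter at most $\delta_N:=\sqrt d\,\alpha_m(g_N)$, whose $s$-dimensional cost is bounded by
\[
\sum_{n\geq N} N_n\bigl(\sqrt d\,\alpha_m(g_n)\bigr)^s
\leq C_{d,s}\sum_{n\geq N}\alpha_1(g_n)\cdots\alpha_{m-1}(g_n)\,\alpha_m(g_n)^{s-(m-1)}
=C_{d,s}\sum_{n\geq N}\Phi^s(g_n).
\]

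By the standing assumption $\alpha_m(g_n)\downarrow 0$, so $\delta_N\to 0$ as $N\to\infty$; and by the choice of $s$ the tail sums $\sum_{n\geq N}\Phi^s(g_n)$ also tend to $0$. Hence $\mathcal H^s(E^\omega(g_n))=0$ and $\dim_H E^\omega(g_n)\leq s$ for every $\omega\in\Omega$. Letting $s\downarrow s_0(g_n)$ gives the lemma. This argument is deterministic in $\omega$, so there is no real obstacle; the only point requiring mild care is the bookkeeping on how many axis-parallel cubes of side $\alpha_m(g_n)$ are needed to cover the rectangle, which is handled by splitting the coordinates at the index $m$ as above.
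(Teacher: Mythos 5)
Your proposal is correct and follows essentially the same route as the paper: tile each $g_n+\xi_n$ by cubes of side $\alpha_m(g_n)$ along the rectangle's own axes, bound the count by $2^{m-1}\alpha_1(g_n)\cdots\alpha_{m-1}(g_n)\alpha_m(g_n)^{-(m-1)}$, and sum the resulting $s$-cost $C_{d,s}\Phi^s(g_n)$ over the tail. The only (harmless) omission is the explicit reduction to the case $s_0(g_n)<d$, which your choice of $m\in\{1,\dots,d\}$ implicitly assumes.
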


\begin{proof}
We may assume that $s_0(g_n)<d$. Let $s_0(g_n)<s<d$ and let $m$ be the
integer with
$m-1<s\leq m$. For each $n\in\mathbb N$ we estimate the
number of cubes of side length $\alpha_m(g_n)$ needed to cover $G_n$.
By expanding the last $d-m+1$ edges of $G_n$ to length $\alpha_m(g_n)$ and
by dividing the expanded rectangle to cubes of side length $\alpha_m(g_n)$,
we end up with an upper bound
\[
\left(\left\lfloor \frac{\alpha_1(g_n)}{\alpha_m(g_n)}\right\rfloor + 1\right )
\dots \left (\left\lfloor \frac{\alpha_{m-1}(g_n)}{\alpha_m(g_n)}\right\rfloor
+ 1\right )
\leq 2^{m-1}\alpha_1(g_n)\cdots\alpha_{m-1}(g_n)\alpha_m(g_n)^{-m+1},
\]
where the integer part of any $x\geq 0$ is denoted by $\lfloor x\rfloor$.

Recalling that for all $N\in\mathbb N$
\[
E(g_n)\subset \bigcup_{n=N}^\infty G_n,
\]
gives the following estimate for the $s$-dimensional Hausdorff measure
\[
\begin{aligned}
\mathcal H^s(E)&
\leq \liminf_{N\to\infty} \sum_{n=N}^\infty 2^{m-1}(\sqrt d \alpha_m(g_n))^s
\alpha_1(g_n)\cdots\alpha_{m-1}(g_n)\alpha_m(g_n)^{-m + 1} \\
&=\liminf_{N\to\infty}\sum_{n=N}^\infty  2^{m-1} (\sqrt d)^s \Phi^s(g_n) =0.
\end{aligned}
\]
This implies that $\dimH E(g_n)\leq s_0(g_n)$. \end{proof}

We continue by proving two auxiliary results.

\begin{lemma}\label{bigger than 1} Assume that $(L_n)$ is a sequence of
contractive linear injections $L_n:\mathbb R^d\to\mathbb R^d$.
Let $s_0$ be as in \eqref{szero} and let $m-1<s_0\leq m$. Defining for all
$m-1<s<s_0$
 \[
f(s):=\limsup_{n\to\infty}\frac{\log n}{-\log\Phi^s(L_n)},
\]
we have $f(s)>1$.
\end{lemma}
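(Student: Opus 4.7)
The plan is to argue by contradiction: suppose $f(s)\le 1$ for some $s\in(m-1,s_0)$, and derive that $\sum_n\Phi^{s'}(L_n)<\infty$ for some $s'\in(s,s_0)$, which contradicts the definition of $s_0$.

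First I would pick an auxiliary exponent $s'$ with $s<s'<s_0$, which is possible since $s<s_0$. By the definition \eqref{szero} of $s_0$, the series $\sum_n\Phi^{s'}(L_n)$ diverges. Since $s<s'\le m$, both singular value functions involve the same integer $m$, so
\[
\frac{\Phi^{s'}(L_n)}{\Phi^s(L_n)}=\alpha_m(L_n)^{s'-s}.
\]

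The key comparison step is to show $\Phi^{s'}(L_n)\le\Phi^s(L_n)^{s'/s}$. This follows from $\alpha_m(L_n)^s\le\Phi^s(L_n)$: indeed, since $\alpha_m(L_n)\le\alpha_i(L_n)$ for $i\le m$, one has $\alpha_1(L_n)\cdots\alpha_{m-1}(L_n)\ge\alpha_m(L_n)^{m-1}$, and hence $\Phi^s(L_n)\ge\alpha_m(L_n)^{m-1}\alpha_m(L_n)^{s-(m-1)}=\alpha_m(L_n)^s$. Substituting $\alpha_m(L_n)\le\Phi^s(L_n)^{1/s}$ into the displayed ratio gives the comparison.

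Next I would translate the hypothesis $f(s)\le 1$ into a quantitative bound. For any $\epsilon>0$, there exists $N_\epsilon$ such that for all $n\ge N_\epsilon$,
\[
\frac{\log n}{-\log\Phi^s(L_n)}<1+\epsilon,\qquad\text{i.e.}\qquad \Phi^s(L_n)<n^{-1/(1+\epsilon)}.
\]
Combined with the comparison step, for $n\ge N_\epsilon$,
\[
\Phi^{s'}(L_n)\le\Phi^s(L_n)^{s'/s}<n^{-s'/(s(1+\epsilon))}.
\]
Now I would choose $\epsilon>0$ small enough so that $s(1+\epsilon)<s'$, which is possible since $s'>s$; this makes the exponent $s'/(s(1+\epsilon))>1$, so $\sum_n n^{-s'/(s(1+\epsilon))}$ converges and therefore $\sum_n\Phi^{s'}(L_n)<\infty$, contradicting $s'<s_0$.

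The only non-routine step is the inequality $\Phi^{s'}(L_n)\le\Phi^s(L_n)^{s'/s}$; once this super-multiplicative-type estimate is in hand, the rest is a direct contradiction with the divergence of the critical series. Note that the strict inequality $f(s)>1$, rather than merely $f(s)\ge 1$, comes precisely from the extra room between $s$ and any $s'\in(s,s_0)$ that one can afford to use.
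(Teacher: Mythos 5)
Your proof is correct and rests on exactly the same key inequality as the paper's, namely $\Phi^{s'}(L_n)=\Phi^s(L_n)\alpha_m(L_n)^{s'-s}\le\Phi^s(L_n)^{s'/s}$ derived from $\Phi^s(L_n)\ge\alpha_m(L_n)^s$. The paper merely packages the argument differently (it shows $f(s)\ge 1$ on $(m-1,s_0)$ and that $f$ is strictly decreasing via $f(s')\le f(s)\,s/s'$, whereas you take the contrapositive and turn $f(s)\le 1$ into convergence of $\sum_n\Phi^{s'}(L_n)$), so this is essentially the same proof.
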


\begin{proof}
We will show that  $f(s)\geq 1$ for all $m-1<s<s_0$ and
$f$ is strictly decreasing. This clearly implies the claim.

Let $m-1<s<s_0$. The fact that
$\sum_{n=1}^\infty \Phi^s(L_n)=\infty$ implies that for all $\epsilon >0$
there exists a subsequence $(n_k)$ such that
$\Phi^s(L_{n_k})>\frac{1}{n_k^{1 + \epsilon}}$ for all $k$. From this we deduce
that $f(s)\ge\frac{1}{1+\epsilon}$, and letting $\epsilon$ go to $0$ yields
$f(s)\geq 1$.

Consider $\delta >0$ such that $m-1<s+\delta<s_0$.
Since $\Phi^s(L_n)\geq \alpha_m(L_n)^s$ we obtain
\[
\Phi^{s + \delta}(L_n)= \Phi^s(L_n)\alpha_m(L_n)^\delta
\leq \Phi^s(L_n)^{1 + \tfrac {\delta}{s}},
\]
giving
\[
f(s+\delta)\leq \limsup _{n\to\infty}
\frac{\log n}{(1 + \frac{\delta}{s})(-\log \Phi^s(L_n))}
=\frac{f(s)}{1 + \frac{\delta}{s}}<f(s).
\]
Hence $f$ is strictly decreasing.
\end{proof}

\begin{remark}
Lemma \ref{bigger than 1} holds for all $0<s<s_0$, but this stronger claim
is not necessary for our purposes.
\end{remark}

\begin{proposition}\label{probability estimate}
Assume that $G\subset \mathbb T^d$ and $\mathcal L(G)>0$. Let
$\xi_1,\dots,\xi_n$ be independent, uniformly distributed random variables on
$\mathbb T^d$.
Let
\[
M_n=\#\{i\in \{1,\dots ,n\}\mid\xi_i\in G\},
\]
where $\#A$ denotes the number of elements in a set $A$. Then
\[
P(M_n\leq \tfrac 12 n\mathcal L(G))
\leq \frac{4(1-\mathcal L(G))}{n\mathcal L(G)}.
\]
\end{proposition}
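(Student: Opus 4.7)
The plan is to recognize that $M_n$ is a sum of independent indicator random variables and then apply Chebyshev's inequality to the deviation below the mean.

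More precisely, I would first write $M_n = \sum_{i=1}^n \chi_G(\xi_i)$, where the summands are independent Bernoulli random variables with success probability $\mathcal{L}(G)$, since the $\xi_i$ are uniformly distributed on $\mathbb{T}^d$. Thus $M_n$ is Binomial with parameters $n$ and $p := \mathcal{L}(G)$, so $\mathbb{E}(M_n) = np$ and $\Var(M_n) = np(1-p)$.

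Next, the event $\{M_n \leq \tfrac12 n\mathcal{L}(G)\}$ is contained in $\{|M_n - np| \geq \tfrac12 np\}$. Applying Chebyshev's inequality to the latter event yields
\[
P\bigl(|M_n - np| \geq \tfrac12 np\bigr) \leq \frac{\Var(M_n)}{(np/2)^2} = \frac{np(1-p)}{n^2 p^2/4} = \frac{4(1-p)}{np},
\]
which is exactly the required bound.

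There is no real obstacle here; the only thing to be careful about is making sure the one-sided deviation is absorbed into the two-sided Chebyshev estimate, which is automatic.
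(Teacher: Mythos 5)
Your proof is correct and is essentially identical to the paper's: both write $M_n$ as a sum of independent indicators, compute that $\mathbb{E}(M_n)=n\mathcal L(G)$ and $\Var(M_n)=n\mathcal L(G)(1-\mathcal L(G))$ (the paper does this via the second moment $\mathbb{E}(M_n^2)=n\mathcal L(G)+(n^2-n)\mathcal L(G)^2$), and then bound the one-sided event by the two-sided Chebyshev estimate. No issues.
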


\begin{proof}
Denote by $\chi _A$ the characteristic function of a set $A$. Calculating
the first and second moments of $M_n$ gives
\[
\mathbb E(M_n)=\mathbb E(\sum_{i=1}^n \chi_{\{\xi_i\in G\}})=n\mathcal L(G)
\]
and
\[
\begin{aligned}
\mathbb E(M_n^2)&=\mathbb E\big(\big(\sum_{i=1}^n
\chi_{\{\xi_i\in G\}}\big)^2\big)
= \mathbb E\big(\sum_{i=1}^n\chi_{\{\xi_i\in G\}}
+ \sum_{j\neq i}\chi_{\{\xi_i\in G\}}\chi_{\{\xi_j\in G\}}\big)\\
&=n\mathcal L(G) + (n^2-n)\mathcal L(G)^2.
\end{aligned}
\]
From Chebyshev's inequality we deduce
\[
\begin{aligned}
P(M_n\leq \tfrac 12\mathbb E(M_n))&\leq P\big(|M_n-\mathbb E(M_n)|
\geq \tfrac 12\mathbb E(M_n)\big)\\
&\leq \frac{4(\mathbb E(M_n^2)-\mathbb E(M_n)^2)}{\mathbb E(M_n)^2}
=\frac{4(1-\mathcal L(G))}{n\mathcal L(G)}
\end{aligned}
\]
which completes the proof.
\end{proof}

\section{Construction of random Cantor sets}\label{construction}

Let $(g_n)$ and $s_0(g_n)$ be as in Proposition~\ref{almostrealthing}.
Consider an integer $m$ such that $m-1<s_0(g_n)\leq m$.
For notational simplicity, we assume that 0 is a vertex of each $g_n$. Indeed,
by choosing suitable deterministic translates, we find an isomorphic
probability space $(\Omega',\mathcal A',P')$ where this is the case since
the random variables $(\xi_n)$ are uniformly distributed and the rectangles
$(g_n)$ are deterministic.
For each $n$, let $T_n:\mathbb R^d\to\mathbb R^d$ be a linear map such that
$\Pi(T_n([0,1]^d))=g_n$. Observe that $\alpha_i(T_n)=\alpha_i(g_n)$ for
all $i=1,\dots,d$.
Let $m-1<s<s_0(g_n)$. For the purpose of proving Proposition
\ref{almostrealthing}
we construct in this section an event $\Omega(\infty)\subset \Omega$, having
positive probability,
and a random Cantor like set $C^\omega$ such that $C^\omega\subset E^\omega$
for all
$\omega\in \Omega(\infty)$.
In Section \ref{dimension} we prove that $\dimH C^\omega \geq s$ almost
surely conditioned on $\Omega(\infty)$. 

Let $a_0=\tfrac12$. Consider a sequence $(a_l)$ of real numbers larger than
1/2 increasing to $1$ with
$\Pi_{l=1}^\infty \tfrac 1{a_l} <\infty$. By Lemma \ref{bigger than 1}, there
exists a sequence $(n_k)$ of natural numbers satisfying
\begin{equation}\label{firstchoice}
\lim_{k\to\infty}\frac{\log n_k}{-\log\Phi^s(T_{n_k})}=f(s)>1.
\end{equation}
Moreover, by considering a suitable subsequence of $(n_k)$, we may assume that
for all $k\in\mathbb N$
\begin{align}
&\diam(g_{n_k})\le\frac 12(1-a_{k-1})\alpha_d(g_{n_{k-1}}),\label{diam}\\
&n_k\mathcal L(g_{n_{k-1}})\geq n_k^{\frac{3 + f(s)}{2 + 2f(s)}}\text{ and }
\label{exponent epsilon}\\
&\log n_k\ge n_{k-1}\label{nk large}
\end{align}
where $n_0=0$ and $g_0=\mathbb T^d$. Notice that since the sequence $(n_k)$
is deterministic it is independent of $\omega\in\Omega$.

We proceed by constructing inductively a random nested sequence
of finite collections $\mathcal C_k$ of rectangles as follows: Let
$\mathcal C_0=\{\mathbb T^d\}$ and $N_0=1$. Define
$N_1=\lfloor\tfrac12a_0^dn_1\rfloor$ and
$I(1,\mathbb T^d)= \{1,\dots, N_1\}$. For all $i\in I(1,\mathbb T^d)$,
let $g_i'$ be a linear isometric copy of $g_{n_1}$
contained in $g_i$. The existence of $g_i'$ follows from the fact that
$\alpha_j(g_{n_1})\leq \alpha_j(g_i)$ for all $i\leq n_1$ and
$j=1,\dots ,d$. For each $i\in I(1,\mathbb T^d)$, set $G_i'=g_i'+\xi_i$. Then
$G_i'\subset G_i$. Defining $\mathcal C_1=\{G_i'\,|\,i\in I(1,\mathbb T^d)\}$,
we have
\[
\bigcup _{G\in\mathcal C_1}G\subset \bigcup _{i=1}^{n_1}G_i.
\]
Furthermore, the collection $\mathcal C_1$ can be chosen for any
$\omega\in\Omega=:\Omega(1)$ giving $P(\Omega(1))=q_1$ with $q_1=1$.

Assume that there exist events $\Omega(1),\dots,\Omega(k-1)$ with
$P(\cap_{j=1}^{k-1}\Omega(j))=q_1\cdots q_{k-1}$ such that for all
$\omega\in\cap_{j=1}^{k-1}\Omega(j)$ there are
collections $\mathcal C_1,\dots ,\mathcal C_{k-1}$ having the following
properties for all $j=1,\dots,k-1$
\begin{align}
&(\tfrac 12)^{3}a_{j-1}^d(n_j-n_{j-1})\mathcal L(g_{n_{j-1}})\leq N_j\leq
  (n_j-n_{j-1})\mathcal L(g_{n_{j-1}}),
\text{ where }N_j=\#\mathcal C_j,\label{number of sets}\\
&\bigcup _{G\in\mathcal C_{j}}G\subset\bigcup_{G\in\mathcal C_{j-1}}G,
  \label{inclusion}\\
&\#\{G'\in \mathcal C_j\mid G'\subset G\}=\lfloor\tfrac 12 a_{j-1}^d m_j
  \mathcal L(g_{n_{j-1}})\rfloor
\text{ for each }G\in \mathcal C_{j-1} \label{divnumber sets} \\
&\text{ where }
m_j=\lfloor\tfrac{n_j-n_{j-1}}{N_{j-1}}\rfloor\notag\\
&\mathcal C_j\text{ is a finite collection of isometric copies of }g_{n_j}
  \text{ and }\label{rotations}\\
&\bigcup_{G\in\mathcal C_j}G\subset\bigcup_{l=n_{j-1} + 1}^{n_j}G_l.
  \label{inclusion again}
\end{align}
We define an event $\Omega(k)$ such that
$P(\cap_{j=1}^k\Omega(j))=q_1\cdots q_k$ and for all
$\omega\in\cap_{j=1}^k\Omega(j)$ there is a collection $\mathcal C_k$
satisfying \eqref{number of sets}--\eqref{inclusion again}.
Write
$\mathcal C_{k-1}=\{\widetilde G_1,\dots ,\widetilde G_{N_{k-1}}\}$
and set $m_k=\lfloor\tfrac{n_k-n_{k-1}}{N_{k-1}}\rfloor$.
For $l=1,\dots, N_{k-1}$, define random sets
\[
\widetilde I(k,\widetilde G_l)=\big\{i\in\{n_{k-1}+1+(l-1)m_k,\dots,n_{k-1}+lm_k
 \}\mid\xi_i\in a_{k-1}\widetilde G_l\big\},
\]
where $aG$ is the similar copy of $G$ with similarity ratio $a$ and with the
same centre as $G$. Let
\[
\Omega(k)=\big\{\omega\in\Omega\mid
  \#\widetilde I(k,G)> \tfrac 12 a_{k-1}^d m_k\mathcal L(g_{n_{k-1}})
\text{ for all }G\in\mathcal C_{k-1}\big\}
\]
and
\[
q_k=P(\Omega(k)|\Omega(1), \dots, \Omega(k-1)).
\]
Note that $q_k>0$. For each $G\in \mathcal C_{k-1}$ we denote by
$I(k,G)$ the collection of the first
$\lfloor\tfrac 12a_{k-1}^dm_k\mathcal L(g_{n_{k-1}})\rfloor$ elements in
$\widetilde I(k,G)$ and set
\[
\mathcal C_k=\{G_i'\mid G\in\mathcal C_{k-1},\, i\in I(k,G)\}\text{ and }
N_k=\#\mathcal C_k,
\]
where $G_i'=g_i'+\xi_i$ and $g_i'$ is a linear isometric copy of $g_{n_{k}}$
contained in $g_i$. (See Figure 1.)
\begin{figure}\label{figure}
\def\svgwidth{0.6\columnwidth}
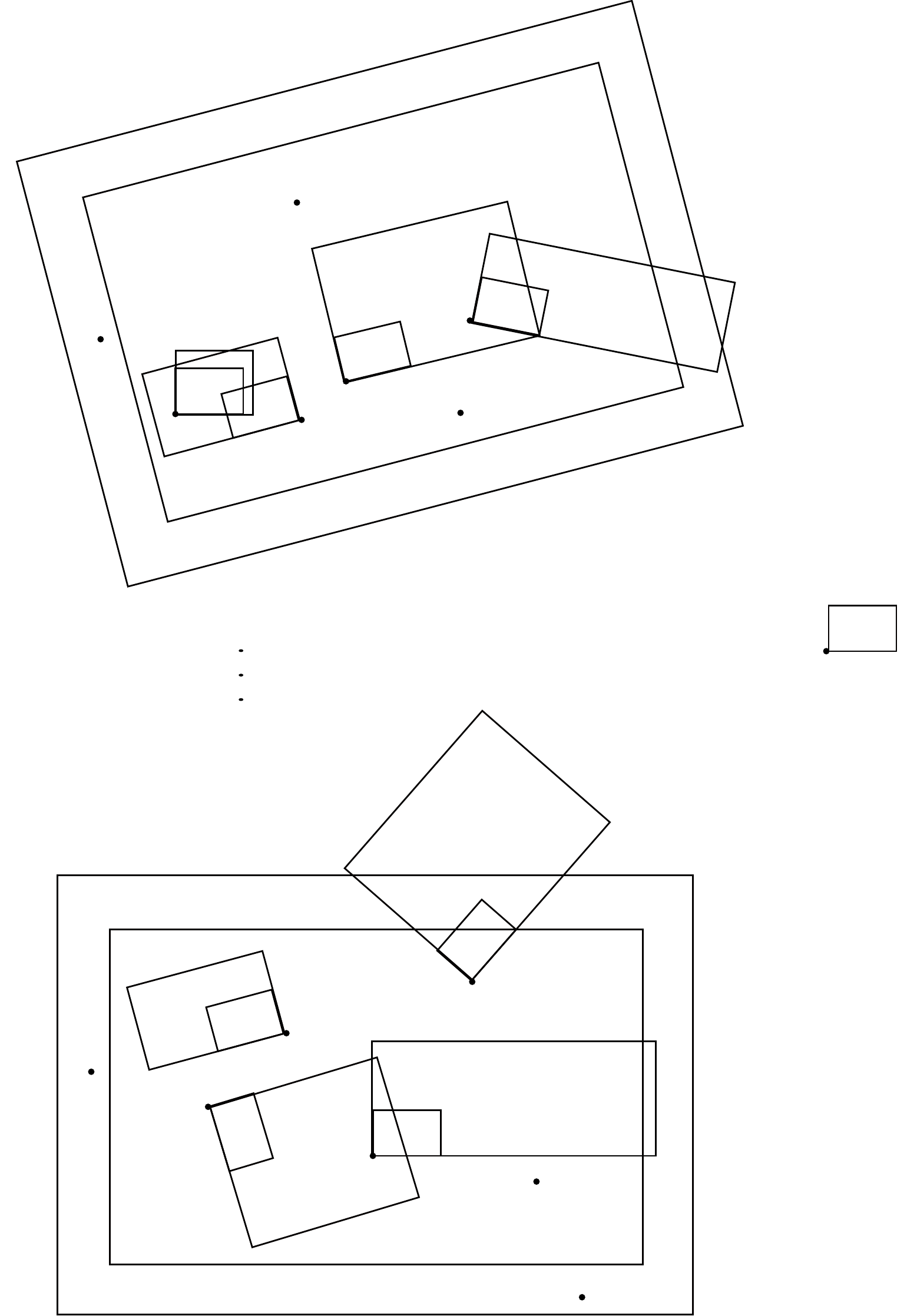
\caption{Construction of $\mathcal C_k$.}
\end{figure}
Observe that $N_k$ is deterministic.
As above, $g_i'$ exists since
$\alpha_j(g_{n_k})\le\alpha_j(g_i)$ for all $j=1,\dots,d$ and $i\leq n_k$.
Clearly, \eqref{divnumber sets} and \eqref{rotations} are valid for
$\mathcal C_k$. Since, by
inequality \eqref{diam}, we have $g'_i+\xi_i\subset G\in\mathcal C_{k-1}$
provided that
$\xi_i\in a_{k-1}G$, property \eqref{inclusion} holds for
$\mathcal C_k$. Furthermore, the choices of $m_k$ and $I(k,G_l)$ imply
\eqref{inclusion again}. The choice of $m_k$ gives
\begin{align*}
(\tfrac 12)^3a_{k-1}^d(n_k-n_{k-1})\mathcal L(g_{n_{k-1}})&\leq N_{k-1}
   \lfloor\tfrac 12 a_{k-1}^d m_k\mathcal L(g_{n_{k-1}})\rfloor=N_k\\
&\leq (n_k-n_{k-1})\mathcal L(g_{n_{k-1}}),
\end{align*}
and therefore,
condition (\ref{number of sets}) is satisfied for $\mathcal C_k$. Finally,
\[
P\big(\bigcap _{l=1}^k\Omega(l)\big)= P(\Omega(k)\mid\Omega(1),\dots,
  \Omega(k-1))P\big(\bigcap_{l=1}^{k-1}\Omega(l)\big)
=q_1\cdots q_k.
\]

Letting $\Omega(\infty)=\bigcap_{n=1}^\infty \Omega(n)$, we have
$P(\Omega(\infty))=\Pi_{n=1}^\infty q_n$. Define for all
$\omega\in\Omega(\infty)$
\[
C^\omega=\bigcap _{n=1}^\infty\bigcup_{G\in\mathcal C_n}G\subset E^\omega.
\]
Next we verify that the Cantor like set
$C^\omega\subset E^\omega$
exists with positive probability. We use the notation
$\mathcal F_k$ for the $\sigma$-algebra
generated by the random variables $\xi_1,\dots,\xi_{n_k}$.

\begin{proposition}\label{Nk deterministic} With the above notation we have
$P(\Omega(\infty))>0$.
\end{proposition}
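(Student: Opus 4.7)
The plan is to show that $\sum_{k=1}^\infty (1-q_k) < \infty$. Since $0 < q_k \leq 1$, standard arguments about infinite products then give $\prod_{k=1}^\infty q_k > 0$, which is exactly $P(\Omega(\infty)) > 0$.

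To estimate $1-q_k$, I would first observe that, on the event $\bigcap_{j=1}^{k-1}\Omega(j)$, the collection $\mathcal C_{k-1}=\{\widetilde G_1,\dots,\widetilde G_{N_{k-1}}\}$ is $\mathcal F_{k-1}$-measurable, while the random variables $\xi_i$ with $i>n_{k-1}$ are independent of $\mathcal F_{k-1}$ and independent of each other. Thus, conditionally on $\mathcal F_{k-1}$, for each fixed $l\in\{1,\dots,N_{k-1}\}$ the $m_k$ random variables $\xi_i$ indexed by $i\in\{n_{k-1}+1+(l-1)m_k,\dots,n_{k-1}+lm_k\}$ are i.i.d.\ uniform on $\mathbb T^d$, and $\mathcal L(a_{k-1}\widetilde G_l)=a_{k-1}^d\mathcal L(g_{n_{k-1}})$. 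Applying Proposition~\ref{probability estimate} to each $a_{k-1}\widetilde G_l$ and then a union bound over $l=1,\dots,N_{k-1}$, I obtain
\[
1-q_k\le\frac{4N_{k-1}}{m_k\,a_{k-1}^d\,\mathcal L(g_{n_{k-1}})}.
\]

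Next I would bound $N_{k-1}/m_k$ using the inductive estimates. From \eqref{number of sets} and \eqref{nk large}, $N_{k-1}\le(n_{k-1}-n_{k-2})\mathcal L(g_{n_{k-2}})\le n_{k-1}\le\log n_k$, which is negligible compared with $n_k-n_{k-1}$ for $k$ large. Hence $m_k\ge(n_k-n_{k-1})/N_{k-1}-1\ge n_k/(3N_{k-1})$ for all large $k$, and
\[
1-q_k\le\frac{12\,N_{k-1}^{2}}{a_{k-1}^d\,n_k\,\mathcal L(g_{n_{k-1}})}
\le\frac{C(\log n_k)^{2}}{n_k\,\mathcal L(g_{n_{k-1}})}
\]
where $C$ is an absolute constant (using that $a_{k-1}\ge 1/2$). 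Now \eqref{exponent epsilon} gives $n_k\mathcal L(g_{n_{k-1}})\ge n_k^{(3+f(s))/(2+2f(s))}$, so
\[
1-q_k\le C(\log n_k)^{2}\,n_k^{-(3+f(s))/(2+2f(s))}.
\]
Since \eqref{nk large} forces $n_k$ to grow at least exponentially in $k$ (in fact as a tower), the right-hand side is summable in $k$, completing the argument.

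The only subtle point is the conditional independence step: one must be careful that, although $\mathcal C_{k-1}$ is itself random, the sets $a_{k-1}\widetilde G_l$ become deterministic once we condition on $\mathcal F_{k-1}$, and the indices used to produce $\widetilde I(k,\widetilde G_l)$ are fresh (they all exceed $n_{k-1}$ and come from disjoint blocks for different $l$), so Proposition~\ref{probability estimate} applies verbatim to the conditional law. Everything else is a routine combination of the quantitative estimates \eqref{number of sets}, \eqref{exponent epsilon}, and \eqref{nk large} put in place precisely to make this Borel--Cantelli-type sum converge.
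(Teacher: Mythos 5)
Your proposal is correct and follows essentially the same route as the paper: condition on $\mathcal F_{k-1}$, apply Proposition~\ref{probability estimate} to each $a_{k-1}\widetilde G_l$ with its block of $m_k$ fresh uniform variables, take a union bound over the $N_{k-1}$ rectangles, and then combine \eqref{number of sets}, \eqref{exponent epsilon} and \eqref{nk large} to show $\sum_k(1-q_k)<\infty$, whence the infinite product is positive. The only differences are cosmetic (the paper writes out the conditional-expectation manipulation explicitly and keeps the factor $1-\mathcal L(a_{k-1}g_{n_{k-1}})$, and the constants in the bound for $m_k$ differ trivially).
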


\begin{proof}
We have
\begin{align*}
q_k&=P(\Omega(k)|\Omega(1),\dots,\Omega(k-1))\\
&=\frac{1}{P(\bigcap_{l=1}^{k-1}\Omega(l))} P\Big(\Omega(k)\cap\bigcap_{l=1}^{k-1}\Omega(l)\Big)\\
&=\frac{1}{P(\bigcap_{l=1}^{k-1}\Omega(l))}\mathbb E\Big(\mathbb E(\chi_{\Omega(k)}\chi_{\bigcap_{l=1}^{k-1}\Omega(l)}\mid \mathcal F_{k-1})\Big)\\
&=\frac{1}{P(\bigcap_{l=1}^{k-1}\Omega(l))}\mathbb E\Big(\mathbb \chi_{\bigcap_{l=1}^{k-1}\Omega(l)}\mathbb E(\chi_{\Omega(k)} \mid \mathcal F_{k-1})\Big)\\
&=\frac{1}{P(\bigcap_{l=1}^{k-1}\Omega(l))}\mathbb E\Big(\mathbb \chi_{\bigcap_{l=1}^{k-1}\Omega(l)}\mathbb E(\chi_{\bigcap\limits_{G\in\mathcal C_{k-1}}
\{\#\widetilde I(k,G)>\tfrac 12 a_{k-1}^d m_k \mathcal L(g_{n_{k-1}})\}} \mid \mathcal F_{k-1})\Big)\\
&\geq\frac{1}{P(\bigcap_{l=1}^{k-1}\Omega(l))}\mathbb E\Big(\mathbb \chi_{\bigcap_{l=1}^{k-1}\Omega(l)}\Big (1-\sum_{G\in \mathcal C_{k-1}}\mathbb E\big (\chi_{\{\#\widetilde I(k,G)\leq
  \tfrac 12 m_k \mathcal L(a_{k-1}g_{n_{k-1}})\}}\mid\mathcal F_{k-1}\big )\Big)\Big),
  \end{align*}
and applying Proposition \ref{probability estimate} hence gives 
\[
q_k\geq 1-N_{k-1}^2 \frac {8(1-\mathcal L(a_{k-1} g_{n_{k-1}}))}{(n_k-n_{k-1})
  \mathcal L(a_{k-1} g_{n_{k-1}})}=:1-p_k.
\]
Inequalities \eqref{number of sets} and \eqref{nk large}, in turn, imply that
$N_{k-1}\leq (n_{k-1}-n_{k-2}) \mathcal L(g_{n_{k-2}})\leq n_{k-1}\leq \log n_k$,
and therefore, noting that $n_k-n_{k-1}\geq\tfrac 12 n_k$ by \eqref{nk large}
and using \eqref{exponent epsilon}, we obtain
\[
\sum_{k=1}^\infty p_k\le\sum_{k=1}^\infty\frac{8(1-\mathcal L(a_{k-1}g_{n_{k-1}}))
  (\log n_k)^2}{\tfrac 12a_{k-1}^dn_k\mathcal L(g_{n_{k-1}})}
\leq \sum_{k=1}^\infty \frac{8(\log n_k)^2}{\tfrac 12a_{k-1}^d n_k^{(3+f(s))/
  (2+2f(s))}}<\infty,
\]
where the convergence follows since by \eqref{nk large} the sequence $(n_k)$ is
growing exponentially fast. Letting $k_0\in\mathbb N$ be such that $p_k<1$ for
all $k\ge k_0$, we have
$\Pi_{k=1}^\infty q_k\geq\Pi_{k=1}^{k_0}q_k\Pi_{k=k_0+1}^\infty(1-p_k)>0$.
\end{proof}

\begin{remark} 
The idea of finding a large-dimensional Cantor subset of the random covering set was already exploited in the dimension calculation of Fan and Wu \cite{FanWu04} in the case of $\mathbb T^1$. In their proof it is essential that the sets $C^\omega$ are homogeneous and the construction intervals are well-separated, which follows from well-known results on random spacings of uniform random samples \cite{Hawkes85}. Structure of the set allows them then to directly estimate sizes of intersections of balls with the set $C^\omega$, giving the dimension bound from below. In our choice of the subset $C^\omega$, however, separation of the generating sets plays no role. Indeed, it is a well-known fact that for self-affine sets no separation condition guarantees the dimension formula. Also a direct estimate for measures of balls is probably hopeless. Instead a potential theoretic method based on a transversality argument is the key, see lemma \ref{falconer} below. In the implementation of this idea we need the assumption \eqref{rotations}. 
\end{remark}

\section{Dimension estimate}\label{dimension}

Using the notation introduced in
Section ~\ref{construction}, we prove that for $s<s_0(g_n)$ the event
$\{\omega\in\Omega(\infty)\mid\dimH C^\omega\geq s\}$ has positive probability.
To obtain the dimension bound, we use potential theoretic methods and
define a measure supported on $C^\omega$ with finite $s$-energy. In what follows, we consider only the event $\Omega(\infty)$ and denote the
expectation over $\Omega(\infty)$ simply by $\mathbb E$.

For any $\omega\in\Omega(\infty)$, $k\in\mathbb N$ and $G\in\mathcal C_{k-1}$,
let $M_k=\#I(k,G)=\lfloor\tfrac 12a_{k-1}^dm_k\mathcal L(g_{n_{k-1}})\rfloor$
be the number of level $k$ construction rectangles contained in
$G$. Notice that $M_k$ is a deterministic number depending only on $k$. For
later notational simplicity, we will relabel the random variables
$\xi_i$ using a deterministic tree structure.

For all $l\in\mathbb N$, consider the sets
$\J_l=\{i_1\dots i_l\mid i_j\in\{1,\dots,M_j\}
\text{ for all }j\in\{1,\dots,l\}\}$ and define
$\J=\bigcup _{l=0}^\infty\J_l$,
 with the convention
$\J_0=\{\emptyset\}$. For $\bi$, $\bj\in \J$, denote by $\bi\wedge\bj$ the
maximal common initial sequence of $\bi$ and $\bj$ and let $\bi\bj\in\J$ be
the word obtained by juxtaposing the words $\bi$ and $\bj$.
Further, we denote by $|\bi|$ the length of $\bi\in\J$, that is, $|\bi|=l$ if
$\bi\in\J_l$. For each $l\leq k$ and $\bi\in \J_l$, define the
{\it cylinder} of length $l$ and of depth $k$ by $C(\bi,k)=\{\bj\in \J_k\,|\,\bi\wedge\bj=\bi\}.$ For $i\in\{1,\dots,M_1\}$, define $\phi_i=\xi_i$ and $G(i)=g_i'+\phi_i$ and let
$T_i'$ be a linear map such that $\Pi(T_i'([0,1]^d)=g_i'$.
Assume that we have defined the random variables $\phi_\bi$ and the rectangles
$G(\bi)\in\mathcal C_{k-1}$ for all $\bi\in \J_{k-1}$. Let
$I(k,G(\bi))=\{j_1,\dots, j_{M_k}\}$ where $j_i<j_{i+1}$ in the natural order
given by the construction. For all $i\in \{1,\dots, M_k\}$, define
$\phi_{\bi i}=\xi_{j_i}$, $g_{\bi i}'=g_{j_i}'$ and
$G(\bi i)=g_{\bi i}'+\phi_{\bi i}$ and let $T_{\bi i}'$ be a linear map
satisfying $\Pi(T'_{\bi i}([0,1]^d))=g_{\bi i}'$.
Then $\det(T_{n_{|\bi|}})=\mathcal L(G(\bi))$
and $\Phi^s(T_{\bi}')=\Phi^s(T_{n_{|\bi|}})$ for all $\bi\in\J$.
For notational purposes set $G(\emptyset)=\mathbb T^d$ and $\phi_\emptyset=0$.
When necessary we
view $T'_{\bi}$ as a map on $\mathbb T^d$ by identifying $\mathbb T^d$ with
$[0,1[^d$. Finally, for $\bi_1,\dots,\bi_k\in\J$, denote by $\mathcal F(\bi_1,\dots,\bi_k)$ the
$\sigma$-algebra generated by the events
$\{\omega\in\Omega(\infty)\mid G(\bi_l)=Q_l\text{ for all }l=1,\dots,k\}$,
where each $Q_l\subset\mathbb T^d$ is an isometric copy of $g_{n_{|\bi_l|}}$.
\begin{remark}\label{cond independence}
Note that $\{\phi_{\bi}\mid\bi\in C(\bj,k)\}=\{\xi_i\mid i\in I(k,G(\bj))\}$
for
any $\bj\in \J_{k-1}$ and
$\{\phi_{\bi}\mid\bi\in\J_k\}=\{\xi_i\mid i\in\bigcup_{G\in\mathcal C_{k-1}}I(k,G)
   \}$. Let $A\subset \mathbb T^d$ be a Borel set with $\mathcal L(A)>0$. Since $\xi_j$ is uniformly
distributed on $\mathbb T^d$ for given $j$, every $\xi_j$ is uniformly distributed 
on $A$ when conditioned on the event $\xi_j\in A$. Let $i\in\mathbb N$ and let $\bi 
i\in\J_{k+1}$. By definition $\phi_{\bi i}=\xi_j$ for some $j\in\{n_k+1,
\dots,n_{k+1}\}$ with $\xi_j\in G(\bi)$, and hence the random variable
$\phi_{\bi i}$ is uniformly distributed on
$a_kG(\bi)$ when conditioned on $\phi_{\bi i}=\xi_j$ and the $\sigma$-algebra $\mathcal F(\bi)$. Furthermore, 
\begin{align*}
\mathbb E(\chi_{\{\phi_{\bi i}\in A\}}&\mid \mathcal F(\bi))=\sum_{j=n_{k}+1}^{n_{k+1}}\mathbb E(\chi_{\{\phi_{\bi i}\in A\}}\mid \mathcal F(\bi), \phi_{\bi i}=\xi_j)\mathbb E(\chi_{\{\phi_{\bi i}=\xi_j\}}\mid \mathcal F(\bi))\\
&= \frac{\mathcal L(A\cap a_kG(\bi))}{\mathcal  L(a_kG(\bi))}\sum_{j=n_{k}+1}^{n_{k+1}} \mathbb E(\chi_{\{\phi_{\bi i}=\xi_j\}}\mid \mathcal F(\bi))=\frac{\mathcal L(A\cap a_kG(\bi))}{\mathcal L(a_kG(\bi))}.
\end{align*}
Hence $\phi_{\bi i}$ is uniformly distributed inside $a_k G(\bi)$ when conditioned on $\mathcal F(\bi)$. Moreover, if $\bj$ satisfies $\bj\wedge\bi i\ne\bi i$, conditioning on $\mathcal F(\bi, \bj)$ instead of $\mathcal F(\bi)$ does not change the
uniform distribution of $\phi_{\bi i}$ on $a_kG(\bi)$, since $\xi_j$ and $\xi_l$ are independent for
$j\ne l$. Recall that even though the corner
points $\phi_{\bi i}$ and $\phi_{\bi h}$ are independent for $i\ne h$, the
rectangles $G(\bi i)$ and $G(\bi h)$ are not, since the orientation of
$g'_{\bi i}$ is determined by the index $j_i$.
\end{remark}

\begin{lemma}\label{spt}
The sequence of measures $\mu^\omega_l$ on $\mathbb T^d$ given by
\begin{equation}\label{mul}
\mu^\omega_l=\frac{\sum_{\bi\in \J_l}
  (T_{\bi}'+\phi_\bi)_*\mathcal L}{ N_l}
\end{equation}
converges in weak$^*$-topology to a measure $\mu^\omega$ supported on $C^\omega$. 
\end{lemma}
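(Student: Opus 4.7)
The plan is to prove that each $\mu^\omega_l$ is a Borel probability measure on $\mathbb T^d$ supported on the compact set $K_l^\omega:=\bigcup_{G\in\mathcal C_l}G$, to show that $(\mu^\omega_l)$ is Cauchy in the weak$^*$ topology by a direct consistency estimate between consecutive levels, and finally to locate the support inside $C^\omega$ via the Portmanteau inequality applied to the decreasing family $(K_l^\omega)$. The probability claim is immediate: iterating \eqref{divnumber sets} gives $\#\J_l=M_1\cdots M_l=N_l$, each pushforward $\nu_\bi:=(T'_\bi+\phi_\bi)_*\mathcal L$ is a probability measure supported on $G(\bi)$, so the normalisation $1/N_l$ in \eqref{mul} is correct.

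The main step is the Cauchy estimate. Fix $f\in C(\mathbb T^d)$ with modulus of continuity $\omega_f$. Grouping the level-$(l+1)$ sum by the parent $\bi\in\J_l$ and using $N_{l+1}=N_l M_{l+1}$, one rewrites
\[
\int f\,d\mu^\omega_l-\int f\,d\mu^\omega_{l+1}=\frac{1}{N_l}\sum_{\bi\in\J_l}\Bigl(\int f\,d\nu_\bi-\frac{1}{M_{l+1}}\sum_{j=1}^{M_{l+1}}\int f\,d\nu_{\bi j}\Bigr).
\]
The inner bracket is the difference of two probability measures both supported in $G(\bi)$ by the nesting \eqref{inclusion}, so its absolute value is bounded by $\omega_f(\diam G(\bi))\le\omega_f(\diam g_{n_l})$, uniformly in $\bi$. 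Averaging gives $|\int f\,d\mu^\omega_l-\int f\,d\mu^\omega_{l+1}|\le\omega_f(\diam g_{n_l})$, which tends to $0$ by \eqref{diam} and the uniform continuity of $f$. Hence $(\int f\,d\mu^\omega_l)$ is Cauchy for every continuous $f$, and the Riesz representation theorem provides a weak$^*$ limit $\mu^\omega$, itself a probability measure.

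For the support, I would use that $\mu^\omega_m$ is supported in $K_l^\omega$ for every $m\ge l$ by \eqref{inclusion}. The Portmanteau theorem applied to the open set $\mathbb T^d\setminus K_l^\omega$ then yields $\mu^\omega(\mathbb T^d\setminus K_l^\omega)\le\liminf_m\mu^\omega_m(\mathbb T^d\setminus K_l^\omega)=0$, so $\spt\mu^\omega\subset K_l^\omega$ for every $l$, and consequently $\spt\mu^\omega\subset\bigcap_l K_l^\omega=C^\omega$. I do not foresee a genuine obstacle. The only mild subtlety is that distinct rectangles $G(\bi),G(\bi')$ at the same level may overlap in $\mathbb T^d$, but this is harmless: the consistency estimate is written parent-by-parent, and each parent contributes to $\mu^\omega_l$ and $\mu^\omega_{l+1}$ in exactly matching fashion, so no cross-terms arise.
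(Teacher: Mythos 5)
Your decomposition by parents and the modulus-of-continuity bound are exactly the mechanism of the paper's proof, and your Portmanteau argument for the support is a correct (slightly more explicit) version of what the paper asserts. There is, however, one genuine logical gap: from $|\int f\,d\mu^\omega_l-\int f\,d\mu^\omega_{l+1}|\le\omega_f(\diam g_{n_l})\to 0$ you conclude that $(\int f\,d\mu^\omega_l)$ is Cauchy. That inference is invalid --- a sequence whose consecutive differences tend to zero need not be Cauchy (partial sums of the harmonic series). To telescope you would need $\sum_l\omega_f(\diam g_{n_l})<\infty$; and although \eqref{diam} forces $\diam g_{n_l}$ to decay at least geometrically, the modulus of continuity of a general $f\in C(\mathbb T^d)$ evaluated along a geometric sequence need not be summable (there are continuous functions with $\omega_f(t)$ comparable to $1/\log(1/t)$, for which the sum diverges).

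The repair is immediate and is precisely what the paper does: run your grouping argument between level $l$ and an \emph{arbitrary} level $m\ge l$, not only $m=l+1$. Grouping the level-$m$ sum by level-$l$ ancestors, for each $\bi\in\J_l$ the measure $\frac{1}{N_m}\sum_{\bj\in C(\bi,m)}(T'_\bj+\phi_\bj)_*\mathcal L$ has total mass $\#C(\bi,m)/N_m=1/N_l$ and support in $G(\bi)$ by \eqref{divnumber sets} and \eqref{inclusion}, so it can be compared directly with $\frac{1}{N_l}(T'_\bi+\phi_\bi)_*\mathcal L$; the same computation then gives $|\int f\,d\mu^\omega_l-\int f\,d\mu^\omega_m|\le\omega_f(\diam g_{n_l})$ \emph{uniformly in} $m\ge l$, which does yield the Cauchy property. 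With that single modification your proof coincides with the paper's; everything else (the count $N_{l+1}=N_lM_{l+1}$, the fact that overlaps between distinct $G(\bi)$ are harmless, and the passage to the support of the limit) is correct.
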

\begin{proof}
By the Riesz representation theorem a weak$^*$-limit $\mu^\omega $ exists, if we prove that for all positive, continuous functions $f$ on $\mathbb T^d$ the sequence $\int f\,d\mu^\omega_l$ converges. 

To that end, fix a positive, continuous function $f$ on $\mathbb T^d$ and $\epsilon >0$. Since $\mathbb T^d$ is compact, there exists $\delta>0$ with $|f(x)-f(y)|<\epsilon$ for all $|x-y|<\delta$. Let $K$ be so large that $\diam(g_{n_K})<\delta$, and fix $k\geq K$. Write $\mu^\omega_k$ as a sum of measures $\mu_{\bi,k}^\omega$ defined by
\[
\mu_k^\omega=\sum_{\bi\in\J_K}\sum_{\bj\in C(\bi, k)}\frac{(T_\bj' + \phi_\bj)_*\mathcal L}{N_k} = \sum_{\bi\in\J_K}\mu_{\bi,k}^\omega.
\]
For all $\bi\in\J_K$, we have $\mu_{\bi,k}^\omega(G(\bi))=\tfrac 1{N_K}=\mu_{\bi,K}^\omega(G(\bi))$ and $\spt\mu_{\bi, k}^\omega\subset G(\bi)$. Therefore,
\[
|\int f\, d\mu^\omega_{k} - \int f \,d\mu^\omega_{K}|\leq\sum_{\bi\in\J_K}|\int_{G(\bi)} f\, d\mu^\omega_{\bi, k} - \int_{G(\bi)} f \,d\mu^\omega_{\bi, K}|\leq \epsilon, 
\]
since $\diam G(\bi)=\diam (g_{n_K})<\delta$. Thus sequence $\int f \,d\mu_{l}^\omega$ converges. The claim $\spt \mu^\omega\subset C^\omega$ holds since $C^\omega$ is compact and $\spt\mu^\omega_l\subset \cup_{G\in\mathcal C_l}G$ for all $l$. 
\end{proof}

Next we show that for all $s<s_0(g_n)$ the $s$-energy
$I^s(\mu^\omega)=\iint\frac{d\,\mu^\omega(x) d\,\mu^\omega(y)}{\vert x-y\vert^s}$
of $\mu^\omega$ is finite almost surely.
In the energy estimate we will make use of the following lemma
\cite[Lemma 2.2]{Falconer88}.

\begin{lemma}[Falconer]\label{falconer}
Let $s$ be non-integral with $0<s<d$ and let
$T:\mathbb R^d\to\mathbb R^d$ be an
affine injection. Then there exists a number $0<D_0<\infty$, depending only on
$d$ and $s$, such that
\[
\int_{[0,1]^d}\frac{d\mathcal L(x)}{|T(x)|^s}\leq \frac {D_0}{\Phi^s(T)}.
\]
\end{lemma}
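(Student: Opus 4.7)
The plan is to reduce to a diagonal model by singular value decomposition, normalize the integration domain by an affine change of variable, and then estimate the resulting weakly singular integral via the layer-cake formula. Write $\alpha_i=\alpha_i(T)$ and let $m$ be the integer with $m-1<s<m$.

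Using SVD, I write $T(x)=U\Sigma V^{\top}x+v$ with $U,V\in O(d)$ and $\Sigma=\mathrm{diag}(\alpha_{1},\dots,\alpha_{d})$. Since $|Uz|=|z|$, the change of variable $y=V^{\top}x$ has unit Jacobian and maps $[0,1]^{d}$ into $[-\sqrt{d},\sqrt{d}]^{d}$, so
\[
\int_{[0,1]^{d}}|T(x)|^{-s}\,d\mathcal L(x)\le\int_{[-\sqrt{d},\sqrt{d}]^{d}}|\Sigma y+U^{\top}v|^{-s}\,d\mathcal L(y).
\]
A further substitution $z_{i}=\alpha_{i}y_{i}+(U^{\top}v)_{i}$, whose Jacobian is $\alpha_{1}\cdots\alpha_{d}$, produces
\[
\int_{[0,1]^{d}}|T(x)|^{-s}\,d\mathcal L(x)\le\frac{1}{\alpha_{1}\cdots\alpha_{d}}\int_{B}|z|^{-s}\,d\mathcal L(z),
\]
where $B$ is an axis-aligned rectangular box with side lengths $2\sqrt{d}\,\alpha_{i}$ centered at some $w\in\mathbb R^{d}$.

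Because $|z|^{-s}$ is symmetric-decreasing in every coordinate, iterated Steiner symmetrization of $B$ in the $d$ coordinate directions does not decrease the integral and replaces $B$ by the origin-centered box $B_{0}=\prod_{i}[-\sqrt{d}\,\alpha_{i},\sqrt{d}\,\alpha_{i}]$. Then the layer-cake identity
\[
\int_{B_{0}}|z|^{-s}\,d\mathcal L(z)=s\int_{0}^{\infty}r^{-s-1}V(r)\,dr,\qquad V(r):=\mathcal L(B_{0}\cap B(0,r)),
\]
together with the elementary bound $V(r)\le(2\sqrt{d})^{d}\prod_{i}\min(\alpha_{i},r)$, gives $V(r)\le C_{d}\,r^{k}\alpha_{k+1}\cdots\alpha_{d}$ on each slab $r\in[\sqrt{d}\,\alpha_{k+1},\sqrt{d}\,\alpha_{k})$, with the convention $\alpha_{0}=+\infty$ and $\alpha_{d+1}=0$.

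Evaluating the resulting slab integrals $\int_{\sqrt{d}\alpha_{k+1}}^{\sqrt{d}\alpha_{k}}r^{k-s-1}\,dr$ produces a finite sum of terms proportional to $\alpha_{k+1}^{k-s+1}\alpha_{k+2}\cdots\alpha_{d}$ (when $k<s$) or $\alpha_{k}^{k-s}\alpha_{k+1}\cdots\alpha_{d}$ (when $k>s$). Using the monotonicity $\alpha_{1}\ge\cdots\ge\alpha_{d}$ and the signs of the relevant exponents, each such term is dominated by a constant multiple of $\alpha_{m}^{m-s}\alpha_{m+1}\cdots\alpha_{d}$; summing and dividing by $\alpha_{1}\cdots\alpha_{d}$ yields a bound of $D_{0}/(\alpha_{1}\cdots\alpha_{m-1}\alpha_{m}^{s-m+1})=D_{0}/\Phi^{s}(T)$, as required. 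The main obstacle is exactly the bookkeeping at this final stage, and it is precisely here that non-integrality of $s$ is used: it ensures $k\ne s$ for every slab, so that no logarithmic factor appears in the antiderivative and the constants $1/|k-s|$ stay finite.
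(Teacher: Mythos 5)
Your proof is correct. Note, however, that the paper does not prove this lemma at all: it is quoted verbatim from Falconer \cite[Lemma 2.2]{Falconer88}, so the only available comparison is with Falconer's original argument. Your route --- SVD to reduce to a diagonal map, an affine change of variables carrying the cost $(\alpha_1\cdots\alpha_d)^{-1}$, coordinatewise centering of the box (valid because $t\mapsto(t^2+c)^{-s/2}$ is even and decreasing in $|t|$, so each one-dimensional integral over an interval of fixed length is maximized when the interval is centred at $0$), and then the layer-cake formula with the sublevel-set bound $V(r)\le(2\sqrt d)^d\prod_i\min(\alpha_i,r)$ --- is essentially the standard distribution-function proof, and is very close in substance to Falconer's, who estimates $\mathcal L\{x:|Tx+a|\le r\}$ by the volume of the ellipsoid $T^{-1}(B(-a,r))$ with semiaxes $r/\alpha_i$ and integrates over dyadic shells; the two computations are dual to one another. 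The two delicate points both check out: the slab integrals $\int r^{k-s-1}\,dr$ have no logarithmic term precisely because $s$ is non-integral (this is indeed the only place that hypothesis is used, and it also keeps the factors $1/|k-s|$ bounded), and the final domination of each term by $\alpha_m^{m-s}\alpha_{m+1}\cdots\alpha_d$ does follow from monotonicity alone: for $k>s$ one uses $\alpha_i\ge\alpha_k$ for $i\le k$ together with the positive exponent $m-s$, and for $j=k+1\le m-1$ one rewrites the required inequality as $\alpha_m^{s-m+1}\alpha_{j+1}\cdots\alpha_{m-1}\le\alpha_j^{\,s-j}$, which holds since every factor on the left is at most $\alpha_j$ and the exponents sum to $s-j$; neither step needs $\alpha_i<1$. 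Dividing by $\alpha_1\cdots\alpha_d$ then gives exactly $D_0/\Phi^s(T)$ with $D_0=D_0(d,s)$. The one piece of bookkeeping you leave implicit --- that on the slab $[\sqrt d\,\alpha_{k+1},\sqrt d\,\alpha_k)$ the bound $\prod_i\min(\alpha_i,r)\le r^k\alpha_{k+1}\cdots\alpha_d$ holds --- is immediate from $\min(a,b)\le a$ and $\min(a,b)\le b$, so nothing is missing.
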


\begin{lemma}\label{** cond}
For all $\bi,\bj\in\J$ and $x,y\in\mathbb T^d$ we have
\[
\mathbb E\big(\chi_{G(\bj)}(y)\chi_{G(\bi)}(x)\big)
\leq\big(\Pi_{l=1}^\infty\tfrac 1{a_l} \big)^{2d}\frac{\det(T_{n_{|\bi|}})
  \det(T_{n_{|\bj|}})}{\det(T_{n_{|\bi\wedge\bj|}})^2}\mathbb E\big(
  \chi_{G(\bi \wedge\bj)}(y)\chi_{G(\bi\wedge\bj)}(x)\big).
\]
\end{lemma}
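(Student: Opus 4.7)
The plan is to peel off the corner variables along the ancestral chains from $\bi$ and $\bj$ down to their common ancestor $\bk=\bi\wedge\bj$, applying the conditional uniform distribution of Remark \ref{cond independence} at each step. The case $\bi=\bj$ is trivial, so assume $\bi\ne\bj$.

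The key one-step reduction is the following: for $\mathbf{h}\in\J$ with $\mathbf{h}\ne\emptyset$ and parent $\mathbf{h}'$, and for $\mathbf{l}\in\J$ with $\mathbf{l}\wedge\mathbf{h}\ne\mathbf{h}$ (that is, $\mathbf{h}$ is not a prefix of $\mathbf{l}$), I claim
\[
\mathbb E(\chi_{G(\mathbf{h})}(x)\chi_{G(\mathbf{l})}(y))\le\tfrac{\det T_{n_{|\mathbf{h}|}}}{a_{|\mathbf{h}|-1}^d\det T_{n_{|\mathbf{h}|-1}}}\,\mathbb E(\chi_{G(\mathbf{h}')}(x)\chi_{G(\mathbf{l})}(y)).
\]
To prove it, first use the diameter bound \eqref{diam} to get $G(\mathbf{h})\subset G(\mathbf{h}')$, so $\chi_{G(\mathbf{h})}(x)=\chi_{G(\mathbf{h}')}(x)\chi_{G(\mathbf{h})}(x)$; then condition on $\mathcal F(\mathbf{h}',\mathbf{l})$ and pull out the measurable factor $\chi_{G(\mathbf{h}')}(x)\chi_{G(\mathbf{l})}(y)$. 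To bound the remaining $\mathbb E(\chi_{G(\mathbf{h})}(x)\mid\mathcal F(\mathbf{h}',\mathbf{l}))$, I decompose over the random index $j$ realising $\phi_{\mathbf{h}}=\xi_j$: on each such event the shape $g'_{\mathbf{h}}=g'_j$ is determined, while Remark \ref{cond independence} (whose hypothesis is precisely $\mathbf{l}\wedge\mathbf{h}\ne\mathbf{h}$) ensures that $\phi_{\mathbf{h}}$ is uniform on $a_{|\mathbf{h}|-1}G(\mathbf{h}')$. Consequently
\[
\mathbb E(\chi_{g'_j+\phi_{\mathbf{h}}}(x)\mid\mathcal F(\mathbf{h}',\mathbf{l}),\phi_{\mathbf{h}}=\xi_j)\le\tfrac{\mathcal L(g'_j)}{a_{|\mathbf{h}|-1}^d\mathcal L(G(\mathbf{h}'))}=\tfrac{\det T_{n_{|\mathbf{h}|}}}{a_{|\mathbf{h}|-1}^d\det T_{n_{|\mathbf{h}|-1}}},
\]
and summing over $j$ with weights $P(\phi_{\mathbf{h}}=\xi_j\mid\mathcal F(\mathbf{h}',\mathbf{l}))$ (which sum to $1$ on $\Omega(\infty)$) completes the reduction. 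At $|\mathbf{h}|=1$ we take $G(\mathbf{h}')=\mathbb T^d$, so the level-$1$ factor is just $\det T_{n_1}$, with no $1/a_0^d$ appearing.

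With this reduction in hand, I iterate: first peel $\bi\to\cdots\to\bk$ with $\mathbf{l}=\bj$, then peel $\bj\to\cdots\to\bk$ with $\mathbf{l}=\bk$. The hypothesis $\mathbf{l}\wedge\mathbf{h}\ne\mathbf{h}$ holds at every step, since in the first phase $\mathbf{h}$ lies strictly in the $\bi$-branch below $\bk$ and hence is not a prefix of $\bj$, and in the second phase the current $\mathbf{h}$ is a strict descendant of $\bk=\mathbf{l}$. The determinant factors telescope to $\det T_{n_{|\bi|}}\det T_{n_{|\bj|}}/\det T_{n_{|\bk|}}^2$, while the $a$-factors accumulate to at most $(\prod_{l=1}^\infty 1/a_l)^{2d}$ because no $1/a_0$ ever enters. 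The main subtlety is the coupling between the corner $\phi_{\mathbf{h}}$ and the orientation $g'_{\mathbf{h}}$, both determined by the random rank $j$; summing over $j$ dissolves this coupling because only the rank-invariant volume $\mathcal L(g'_j)=\det T_{n_{|\mathbf{h}|}}$ enters the bound.
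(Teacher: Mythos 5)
Your proof is correct and takes essentially the same route as the paper's: the same one-step reduction via conditioning on $\mathcal F(\mathbf{h}',\mathbf{l})$ and the conditional uniform distribution of the corner point from Remark~\ref{cond independence}, iterated down each branch to $\bi\wedge\bj$ with the determinant factors telescoping. Your explicit check of the prefix hypothesis at each step and of the level-$1$ case (where no $1/a_0$ enters) are welcome refinements of details the paper leaves implicit.
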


\begin{proof}
Since $\left(\Pi_{l=1}^\infty \tfrac 1{a_l}\right)>1$, the claim holds when
$\bi=\bj$. Consider $\bi\neq\bj\in \J$. Without loss of generality, we may
assume that
$|\bi|\geq |\bj|$. Letting $\bk\in\J$ and $i\in\mathbb N$ satisfy
$\bi =\bk i\in \J$,
we obtain for any $x,y\in \mathbb T^d$ that
\begin{align*}
\mathbb E\big(\chi_{G(\bj)}(y)\chi_{G(\bi)}(x)\big)
&=\mathbb E\big(\chi_{G(\bj)}(y)
  \chi_{G(\bk)}(x)\chi_{G(\bi)}(x)\big)\\
&=\mathbb E\big( \chi_{G(\bj)}(y)\chi_{G(\bk)}(x)\mathbb E(\chi_{G(\bi)}(x)
  \mid\mathcal F(\bj,\bk))\big)\\
&=\mathbb E\big( \chi_{G(\bj)}(y)\chi_{G(\bk)}(x)\mathbb E(\chi_{x-g_{\bi}'}
  (\phi_{\bi})\mid\mathcal F(\bj,\bk))\big).
\end{align*}
Even though the orientation of $g_\bi'$ depends on $\omega\in\Omega(\infty)$,
the volume $\mathcal L(g_\bi')$ does not. Therefore, from
Remark~\ref{cond independence} we get
\[
\mathbb E\big(\chi_{x-g_{\bi}'}(\phi_{\bi})\mid\mathcal F(\bj,\bk)\big)
  \le\frac{\mathcal L(g_{n_{|\bi|}})}{\mathcal L(a_{|\bk|}g_{n_{|\bk|}})},
\]
and therefore,
\begin{align*}
\mathbb E\big(\chi_{G(\bj)}(y)\chi_{G(\bi)}(x)\big)
&\le\mathbb E\Bigl(\chi_{G(\bj)}(y)\chi_{G(\bk)}(x)
  \frac{\mathcal L(g_{n_{|\bi|}})}{\mathcal L(a_{|\bk|}g_{n_{|\bk|}})}\Bigr)\\
&=\frac{\det(T_{n_{|\bi| }})}{a_{|\bk|}^d\det(T_{n_{|\bk|}})}\mathbb E\big(
  \chi_{G(\bj)}(y)\chi_{G(\bk)}(x)\big).
\end{align*}
Iterating this with respect to $\bk$, if necessary, gives
\begin{equation}\label{chis}
\mathbb E\big(\chi_{G(\bj)}(y)\chi_{G(\bi)}(x)\big)\leq\big(\Pi_{l=1}^\infty
  \tfrac 1{a_l}\big)^d\frac{\det(T_{n_{|\bi|}})}{\det(T_{n_{|\bi\wedge\bj|}})}
  \mathbb E\big(\chi_{G(\bj)}(y)\chi_{G(\bi\wedge\bj)}(x)\big).
\end{equation}
Inequality \eqref{chis} completes the proof provided that
$\bj=\bi\wedge\bj$. If this is not the case,
we apply the above argument with $\bj$ playing the role of $\bi$ and
$\bi\wedge\bj$ playing that of $\bj$.
\end{proof}

Lemmas ~\ref{falconer} and ~\ref{** cond} lead to the following
energy estimate.

\begin{proposition}\label{energy uniform} Letting $s<s_0(g_n)$,
there exists a constant $C<\infty$ such
that $\int_{\Omega(\infty)}I^s(\mu^\omega_l )\,dP(\omega)<C$ for all
$l\in\mathbb N$. In particular, $I^s(\mu^\omega)<\infty$ for $P$-almost all
$\omega\in\Omega(\infty)$.
\end{proposition}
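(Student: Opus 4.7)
The plan is to bound $\mathbb{E}[I^s(\mu_l^\omega)]$ uniformly in $l$, and then deduce finiteness of $I^s(\mu^\omega)$ via Fatou together with lower semicontinuity of $I^s$ under weak-$*$ convergence. First I would expand by Fubini:
$$\mathbb{E}[I^s(\mu_l^\omega)] = \frac{1}{N_l^2}\sum_{\bi,\bj\in\J_l}\mathbb{E}\iint\frac{d(T_\bi'+\phi_\bi)_*\mathcal{L}(x)\,d(T_\bj'+\phi_\bj)_*\mathcal{L}(y)}{|x-y|^s}.$$
Writing each pushforward as $\chi_{G(\bi)}/\det(T_{n_{|\bi|}})$ times Lebesgue measure recasts each expectation as $(\det(T_{n_{|\bi|}})\det(T_{n_{|\bj|}}))^{-1}\iint \mathbb{E}[\chi_{G(\bi)}(x)\chi_{G(\bj)}(y)]|x-y|^{-s}dx\,dy$, which is exactly the form that Lemma \ref{** cond} is designed to handle.

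Next, setting $\bk=\bi\wedge\bj$, Lemma \ref{** cond} replaces the chi-expectation so that the determinant factors cancel, leaving a constant multiple of $\mathbb{E}\iint_{G(\bk)^2}|x-y|^{-s}dx\,dy$ divided by $\det(T_{n_{|\bk|}})^2$. The crucial observation is that this inner integral is deterministic: the change of variables $x = T_\bk'u+\phi_\bk$, $y = T_\bk'v+\phi_\bk$ turns it into $\det(T_{n_{|\bk|}})^2\iint_{[0,1]^{2d}}|T_\bk'(u-v)|^{-s}du\,dv$. Choosing $s$ non-integral (harmless, since finiteness of $I^s$ for $s$ in a dense subset of $(0,s_0(g_n))$ suffices for the dimension bound), I would apply Falconer's Lemma \ref{falconer} to the inner $v$-integral with $u$ fixed to obtain the bound $D_0\det(T_{n_{|\bk|}})^2/\Phi^s(T_{n_{|\bk|}})$. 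The net conclusion is that each term in the double sum is bounded by $C/\Phi^s(T_{n_{|\bk|}})$ with $C$ independent of $\bi,\bj$.

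Finally I would group the pairs by common-prefix length $p = |\bi\wedge\bj|$: a direct count gives at most $N_l^2/N_p$ pairs with $|\bi\wedge\bj|=p$ for $p<l$, plus $N_l$ diagonal pairs. Dividing by $N_l^2$ yields $\mathbb{E}[I^s(\mu_l^\omega)] \leq C'\sum_{p=0}^{l} 1/(N_p\Phi^s(T_{n_p}))$, and the task reduces to showing convergence of $\sum_{p} 1/(N_p\Phi^s(T_{n_p}))$. This is the main obstacle, and it is precisely where the delicate choice of the subsequence $(n_k)$ pays off. From \eqref{number of sets} combined with \eqref{exponent epsilon} one extracts $N_p\gtrsim n_p^{(3+f(s))/(2+2f(s))}$, while \eqref{firstchoice} yields $1/\Phi^s(T_{n_p})\leq n_p^{1/(f(s)-\epsilon)}$ for any $\epsilon>0$ and $p$ large. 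Since $f(s)>1$ by Lemma \ref{bigger than 1}, an elementary algebraic check gives $1/f(s) < (3+f(s))/(2+2f(s))$, so for small $\epsilon$ the summand decays as $n_p^{-\delta}$ with $\delta>0$, and the super-exponential growth \eqref{nk large} closes the argument. The "in particular" statement then follows from Fatou and weak-$*$ lower semicontinuity of $I^s$, using Lemma \ref{spt} for the convergence $\mu_l^\omega\to\mu^\omega$.
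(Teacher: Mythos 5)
Your proposal is correct and follows essentially the same route as the paper: Fubini, the correlation bound of Lemma \ref{** cond}, the change of variables feeding into Falconer's Lemma \ref{falconer}, the grouping of pairs by the length of $\bi\wedge\bj$ to reduce everything to $\sum_p 1/(N_p\Phi^s(T_{n_p}))$, and the convergence of that series via \eqref{number of sets}, \eqref{exponent epsilon}, \eqref{firstchoice} and \eqref{nk large}. Your exponent bookkeeping ($1/f(s)<(3+f(s))/(2+2f(s))$) is a harmless variant of the paper's, and your explicit remark that $s$ may be taken non-integral for Lemma \ref{falconer} is a small point of care the paper leaves implicit.
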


\begin{proof} Let $s<s_0(g_n)$ and let
$\bi,\bj\in\J$. Define
\[
H(\bi,\bj,s)=\int_{\mathbb T^d}\int_{\mathbb T^d}\frac{1}{|x-y|^s}\,
  d(T_{\bi}'+\phi_\bi)_*\mathcal L(x)\,d(T_{\bj}'+\phi_\bj)_*\mathcal L(y).
\]
As the functions involved are clearly measurable, use of Fubini's theorem and Lemmas \ref{** cond} and \ref{falconer} yields
the following estimate
\begin{equation*}
\begin{aligned}
\int_{\Omega(\infty)} H(\bi,\bj,s)\,dP
&=(\det T_{n_{|\bi|}}\det T_{n_{|\bj|}})^{-1}\int_{\mathbb T^d}\int_{\mathbb T^d}
  \frac{\mathbb E(\chi_{G(\bi)}(x)\chi_{G(\bj)}(y))}{|x-y|^s}\,d\mathcal L(x)\,
  d\mathcal L(y)\\
&\leq\left(\Pi_{l=1}^\infty \tfrac 1{a_l}\right)^{2d}
\int_{\mathbb T^d}\int_{\mathbb T^d}\frac{\mathbb E(\chi_{G(\bi\wedge\bj)}(x)
  \chi_{G(\bi\wedge\bj)}(y))}{\det(T_{\bi\wedge\bj}')^2|x-y|^{s}}\,d\mathcal L(x)
  \,d\mathcal L(y)\label{different}\\
&=\left( \Pi_{l=1}^\infty \tfrac 1{a_l} \right)^{2d}\int_{\Omega(\infty)}
  \int_{\mathbb T^d}\int_{\mathbb T^d}\frac{d\mathcal L(x)\,d\mathcal L(y)\,dP}
  {|T'_{\bi\wedge\bj}(x-y)|^s}\leq\frac{D}{\Phi^s(T_{n_{|\bi\wedge\bj|}})},
\end{aligned}
\end{equation*}
where $D$ depends on $D_0$ of Lemma \ref{falconer}. Combining this with \eqref{mul} gives
\begin{align*}
\int_{\Omega(\infty)}I^s(\mu_l^\omega)\,dP(\omega)&=\int_{\Omega(\infty)}
  \int_{\mathbb T^d}\int _{\mathbb T^d}\frac{1}{|x-y|^s}\,d\mu_l^\omega(x)\,
  d\mu_l^\omega(y)\,dP(\omega)\\
&=\frac{\sum_{\bi\in \J_l}\sum_{\bj\in\J_l}}{N_l^2}\int H(\bi,\bj,s)\,dP\leq N_l^{-2}\sum_{\bi\in J_l}\sum_{\bj\in\J_l}\frac{D}{\Phi^s(T_{n_{|\bi\wedge\bj|}})}\\
&\leq N_l^{-2}\sum_{K=0}^l\sum_{\bk\in\J_K}\sum_{\bi\in C(\bk,l)}\sum_{\bj\in C(\bk,l)}\frac{D}{\Phi^s(T_{n_K})}=\sum_{K=0}^l \frac {D}{N_k\Phi^s(T_{n_K})}.
\end{align*}
From \eqref{firstchoice} we deduce that
$\Phi^s(T_{n_k})>n_k^{\frac{-2}{1 + f(s)}}$ for
large $k$. Recalling \eqref{number of sets}, \eqref{nk large} and
\eqref{exponent epsilon}, gives for large $k$ that
\begin{equation}\label{sum big}
\begin{aligned}
N_k\Phi^s(T_{n_k})&\geq(\tfrac 12)^{3}a_{k-1}^d(n_k-n_{k-1})\mathcal L(g_{n_{k-1}})
  \Phi^s(T_{n_k})\\
&\geq (\tfrac 12)^{4}a_{k-1}^dn_k\mathcal L(g_{n_{k-1}})\Phi^s(T_{n_k})
\geq (\tfrac 12)^{4}a_{k-1}^dn_k^{\frac{3+f(s)}{2+2f(s)}}n_k^{\frac{-2}{1 + f(s)}}\\
&=(\tfrac 12)^{4}a_{k-1}^dn_k^{\frac{f(s)-1}{2+2f(s)}}.
\end{aligned}
\end{equation}
By \eqref{nk large} the sequence $(n_k)$ is growing exponentially fast. Therefore, recalling that $f(s)-1<0$, inequality \eqref{sum big} implies that the series
$\sum_{K=0}^\infty\frac{D}{N_K\Phi^s(T_{n_K})}$
converges.
The final claim follows by approximating the kernel $|x|^{-s}$ by kernels
$\min\{|x|^{-s},A\}$, where $A\in\mathbb N$.
\end{proof}

Now Proposition~\ref{almostrealthing} follows in a straightforward manner.

\begin{proof}[Proof of Proposition \ref{almostrealthing}]
By Lemma \ref{upper bound} it suffices to prove that
$\dimH E\geq s_0(g_n)$.
Consider $m-1<s<s_0(g_n)\leq m$ where $m$ is an integer.
Lemma~\ref{spt} and Proposition~\ref{energy uniform} combined with
\cite[Theorem 8.7]{Mattila} imply that
$\dimH C^\omega \geq s$ almost surely conditioned on $\Omega(\infty)$ which,
in turn, gives
\[
P(\dimH E^\omega\geq s)>0.
\]
Since $\{\dimH E\geq s\}$ is a tail event, from the Kolmogorov zero-one law
we deduce that
$P(\dimH E\geq s)=1$.
Approaching $s_0(g_n)$ along an increasing sequence of real numbers
$s$ gives
$\dimH E^\omega\geq s_0(g_n)$ for $P$-almost all $\omega\in\Omega$.
\end{proof}

As we mentioned in the introduction, for ball like covering sets
the dimension formula is an easy consequence of the mass transference
principle of Beresnevich and Velani. Since the proof is quite simple in this
case, we give the details here.

For a ball $B=B(x,r)\subset\mathbb R^d$ and $0<s<d$, write
$B^s=B(x,r^{\frac{s}{d}})$.
We recall a special
case of the mass transference principle
\cite[Theorem 2]{BeresnevichVelani06} suitable for our purposes.

\begin{theorem}[Beresnevich-Velani]\label{MTP}
Let $(B_n)\subset\mathbb R^d$ be a sequence of balls whose
radii converge to zero. Suppose that for any ball $B\subset\mathbb R^d$
\[
\mathcal H^d(B\cap\limsup_{n\to\infty}B_n^s)=\mathcal H^d(B).
\]
Then for any ball $B$ in $\mathbb R^d$,
\[
\mathcal H^s(B\cap\limsup_{n\to\infty} B_n)=\infty.
\]
\end{theorem}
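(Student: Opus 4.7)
The plan is to follow the original Beresnevich--Velani argument. Since the statement concerns every ball $B$, it suffices to exhibit, inside a fixed ball $B$, a compact Cantor-type set $K\subset B\cap\limsup_{n\to\infty} B_n$ carrying a mass distribution $\mu$ satisfying $\mu(B(x,r))\le C r^s$ for all $x$ and all sufficiently small $r$. The mass distribution principle then yields $\mathcal H^s(K)>0$, and iterating the construction in disjoint sub-balls of $B$ upgrades positivity to $\infty$.

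First I would inductively build a nested sequence $\mathcal F_1\supset\mathcal F_2\supset\dots$ of finite disjoint families of balls drawn from $(B_n)$. Given a parent $P\in\mathcal F_{k-1}$, the hypothesis $\mathcal H^d(P\cap\limsup B_n^s)=\mathcal L(P)$ guarantees that $\mathcal L$-almost every point of $P$ lies in infinitely many $B_n^s$. Fixing a large threshold $N_k$ and applying the Vitali covering lemma to $\{B_n^s\mid B_n^s\subset\tfrac12 P,\ n\ge N_k\}$, I extract finitely many pairwise disjoint $B_{n_j}^s$ whose total Lebesgue measure is at least a fixed fraction of $\mathcal L(P)$. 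The children of $P$ in $\mathcal F_k$ are then the concentric small balls $B_{n_j}$ sitting deep inside these $B_{n_j}^s$. Choosing $N_k$ large enough forces the diameters in $\mathcal F_k$ to tend to zero, so $K=\bigcap_k\bigcup_{C\in\mathcal F_k}C$ is a compact subset of $\limsup_{n\to\infty}B_n$.

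Next I would define $\mu$ by distributing, at each stage, the mass of every parent among its children in proportion to $r_{n_j}^s$ (equivalently, to $\mathcal L(B_{n_j}^s)$), and pass to the weak-$*$ limit. The decisive geometric input is that children of a parent $P$ are well separated because they live inside the pairwise disjoint larger balls $B_{n_j}^s$; combined with the key identity $\mathcal L(B_{n_j}^s)\sim r_{n_j}^s$, the $s$-homogeneous mass assignment matches the $d$-dimensional packing budget available inside $P$. This matching is exactly what makes the mass transference principle possible: shrinking $B_n^s$ down to $B_n$ trades $d$-dimensional Lebesgue content for the correct amount of $s$-dimensional Hausdorff content.

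The main obstacle is the mass distribution estimate $\mu(B(x,r))\le Cr^s$ itself. One has to pick the generations $(N_k)$ lacunary enough to separate the spatial scales, and then perform a case analysis on $r$ relative to the characteristic radius of each generation, so that both the contribution from the finest generation whose balls still exceed $r$ and the contributions from coarser generations sum to $Cr^s$ uniformly in $x$ and $r$. Balancing the gain from disjointness of the enlarged balls $B_n^s$ against the loss incurred by shrinking to the much smaller $B_n$ is the technical heart of the Beresnevich--Velani argument.
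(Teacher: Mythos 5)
The paper does not prove this statement at all: it is quoted verbatim as a special case of \cite[Theorem 2]{BeresnevichVelani06} and used as a black box, so there is no internal proof to compare your attempt against. Your outline does follow the actual Beresnevich--Velani strategy (a nested Cantor construction whose generation-$k$ children are balls $B_{n_j}$ sitting inside pairwise disjoint enlarged balls $B_{n_j}^s$ extracted by a Vitali-type covering argument from the full-measure hypothesis, followed by an $s$-homogeneous mass distribution and the mass distribution principle), and the identity $\mathcal L(B_n^s)\asymp r_n^s$ is indeed the mechanism that trades $d$-dimensional Lebesgue content for $s$-dimensional Hausdorff content.

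As a proof, however, what you have written is a roadmap with the decisive step missing. You yourself identify the estimate $\mu(B(x,r))\le Cr^s$ as ``the technical heart'' and then do not carry it out; the case analysis over generations, the choice of the lacunary thresholds $N_k$, and the bookkeeping that controls how many children of a given parent a ball of radius $r$ can meet (using the disjointness of the $B_{n_j}^s$) is precisely where the work lies, and without it nothing is proved. A second, more concrete gap: ``iterating the construction in disjoint sub-balls of $B$ upgrades positivity to $\infty$'' does not follow from positivity alone --- a measure can be positive on every ball and still be finite. To make this work you need a quantitative lower bound of the form $\mathcal H^s(B'\cap\limsup_{n\to\infty}B_n)\ge c\,r(B')^s$ uniformly over balls $B'$, so that packing $B$ with $N\sim(r(B)/\rho)^d$ disjoint balls of radius $\rho$ yields a total of order $\rho^{s-d}\to\infty$ as $\rho\to 0$ (using $s<d$); alternatively, Beresnevich and Velani build, for each $\eta>0$, a Cantor set with mass estimate $\mu(A)\ll r(A)^s/\eta$ and let $\eta\to\infty$. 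Either route requires tracking constants that your sketch does not address.
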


\begin{proposition}\label{easythm}
Consider a sequence $(g_n)$ of subsets of $\mathbb T^d$ satisfying
$B(x_n,r_n)\subset g_n$ for sequences of points $(x_n)$ and
radii $(r_n)$. Letting $\rho_n$ be the diameter of $g_n$ with
$\rho_n\downarrow 0$, assume that
there exists $C<\infty$ such that $\frac{\rho_n}{r_n}\le C$ for all
$n\in\mathbb N$. Let $(\xi_n)$ be a sequence of independent random variables, uniformly distributed on $\mathbb T^d$. Then for $E=\limsup_{n\to\infty} (g_n + \xi_n)$, almost surely
\[
\dimH E=\min\{s_0,d\},
\]
where $s_0=\inf\{s\geq 0\mid\sum_{n=1}^\infty\rho_n^s<\infty\}$.
\end{proposition}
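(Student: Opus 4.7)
The plan is to establish matching upper and lower bounds for $\dimH E$, the key ingredient for the lower bound being the mass transference principle of Theorem~\ref{MTP} in the nontrivial regime.

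For the upper bound I would argue as in Lemma~\ref{upper bound}, but using balls in place of rectangles. Since $E\subset\bigcup_{n\ge N}(g_n+\xi_n)$ for every $N$ and each translate has diameter $\rho_n$, any $s>s_0$ yields $\mathcal H^s(E)\le\liminf_{N\to\infty}\sum_{n\ge N}\rho_n^s=0$; together with the trivial bound $\dimH E\le d$ this gives $\dimH E\le\min\{s_0,d\}$.

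For the lower bound I would split into two cases. If $\sum_{n}\rho_n^d=\infty$, then the hypothesis $g_n\supset B(x_n,r_n)$ with $r_n\ge\rho_n/C$ forces $\mathcal L(g_n)\ge c\rho_n^d$ for some $c>0$, hence $\sum_{n}\mathcal L(g_n)=\infty$, and the $d$-dimensional Lebesgue zero-one dichotomy recalled in the introduction immediately yields $\mathcal L(E)=1$ almost surely, whence $\dimH E=d=\min\{s_0,d\}$. Otherwise $\sum_n\rho_n^d<\infty$, so $s_0\le d$ and $\min\{s_0,d\}=s_0$. Fix any $s<s_0$ and set $B_n=B(x_n+\xi_n,r_n)$; since $B(x_n,r_n)\subset g_n$, the inclusion $\limsup_n B_n\subset E$ is automatic, so it suffices to prove $\dimH\limsup_n B_n\ge s$ almost surely. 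Applying Theorem~\ref{MTP} to $(B_n)$ with exponent $s/d<1$, it is enough to show $\mathcal L(\limsup_n B_n^s)=1$ almost surely, where $B_n^s=B(x_n+\xi_n,r_n^{s/d})$. For each fixed $x\in\mathbb T^d$ and $n$ large, the events $\{x\in B_n^s\}=\{\xi_n\in x-x_n+B(0,r_n^{s/d})\}$ are independent with probabilities comparable to $r_n^s\ge C^{-s}\rho_n^s$; since $s<s_0$ these probabilities sum to infinity, so the divergence half of the Borel--Cantelli lemma combined with Fubini's theorem gives $\mathcal L(\limsup_n B_n^s)=1$ almost surely. Letting $s$ run through a countable sequence increasing to $s_0$ finishes the argument.

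The one real technical subtlety is that Theorem~\ref{MTP} is stated for balls in $\mathbb R^d$ rather than in $\mathbb T^d$. Since $r_n^{s/d}\to 0$, I would bypass this by working inside a small Euclidean ball $B_0$ contained in a single fundamental domain: for sufficiently large $n$ every $B_n$ meeting $B_0$ lifts canonically to $\mathbb R^d$, and applying the mass transference principle to the lifted sequence produces $\mathcal H^s(B_0\cap\limsup_n B_n)=\infty$. Locality of Hausdorff dimension then transfers the bound to $E$.
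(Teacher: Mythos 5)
Your proposal is correct and follows essentially the same route as the paper: the covering bound $\mathcal H^s(E)\le\liminf_N\sum_{n\ge N}\rho_n^s$ for the upper bound, and for the lower bound the inscribed balls $B_n=B(x_n+\xi_n,r_n)$ together with Borel--Cantelli and Fubini to verify the full-measure hypothesis of Theorem~\ref{MTP}. Your two additions --- handling the case $\sum_n\rho_n^d=\infty$ separately via the Lebesgue-measure dichotomy (the paper instead just takes $s<\min\{s_0,d\}<d$ throughout), and the lifting argument reconciling the $\mathbb R^d$ statement of the mass transference principle with the torus setting (a point the paper passes over silently) --- are both sound but do not change the substance of the argument.
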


\begin{proof}
Let $s>s_0$. Set $G_n=g_n+\xi_n$. Since $E\subset\bigcup_{n=N}^\infty G_n$ for all $N$, we obtain
\[
\mathcal H^s(E)\leq\liminf_{N\to\infty}\sum_{n=N}^\infty\rho_n^s =0,
\]
giving $\dimH E\leq\min\{s_0,d\}$.

Obviously, $E\supset\limsup_{n\to\infty}B_n$ where $B_n=B(x_n + \xi_n,r_n)$.
Consider $s<\min\{s_0,d\}$.
Letting $K=\mathcal L(B(0,1))$, we have
\begin{equation}\label{BC}
\sum_{n=1}^\infty \mathcal L(B_n^s)= K\sum_{n=1}^\infty r_n^s\geq
  KC^{-s}\sum_{n=1}^\infty\rho_n^s=\infty.
\end{equation}
Since $P(x\in B_n^s)=\mathcal L(B_n^s)$ for all $x\in\mathbb T^d$ and $n\in\mathbb N$, Borel-Cantelli lemma and  \eqref{BC} imply $P(x\in \limsup_{n\to\infty} B_n^s)=1$. Applying Fubini's theorem, gives $\mathcal L(\limsup_{n\to\infty}B_n^s)=1$ almost surely, implying
$\mathcal L(\limsup_{n\to\infty}B^s_n\cap B)=\mathcal L(B)$ for any ball
$B\subset \mathbb T^d$. From Theorem~\ref{MTP} we get
$\mathcal H^s(\limsup_{n\to\infty} B_n)=\infty$, which leads to
$\dimH E\ge\min\{s_0,d\}$, almost surely. \end{proof}

\begin{remark}\label{alldecrease}
In $\mathbb T^1$ one may assume without loss of generality that $(l_n)$ is a
decreasing sequence by reordering the sequence if necessary whereas in
$\mathbb T^d$ with $d>1$ one cannot always reorder
$\alpha_i(L_n)$ simultaneously for all $i=1,\dots,d$. However, we do not
know whether this assumption is necessary for the validity of
Theorem~\ref{realthing}.
\end{remark}

\end{document}